\newtheorem{theo}{Theorem}[section]
\newtheorem{prop}[theo]{Proposition}
\newtheorem{defi}[theo]{Definition}
\newtheorem{properties}[theo]{Properties}
\newcommand{\R}{\mathbb{R}}
\numberwithin{equation}{section}
\def\w{\textit {\textbf w}}
\def\z{\textit {\textbf z}}
\def\u{\textit {\textbf u}}
\def\n{\textit {\textbf n}}
\newcommand{\N}{\mathbb{N}}
\newcommand{\Z}{\mathbb{Z}}
\newcommand{\mylabel}[1]{\label{#1}
            \ifx\undefined\stillediting
            \else \fbox{$#1$}\fi }
\newcommand{\BE}{\begin{equation}}
\newcommand{\EEQ}{\end{equation}}
\newcommand{\rfb}[1]{\mbox{\rm
   (\ref{#1})}\ifx\undefined\stillediting\else:\fbox{$#1$}\fi}
\newfont{\Blackboard}{msbm10 scaled 1200}
\newfont{\roma}{cmr10 scaled 1200}
\def\CC{\rm \hbox{C\kern-.56em\raise.4ex
         \hbox{$\scriptscriptstyle |$}\kern+0.5 em }}
\newcommand{\mm}    {{\hbox{\hskip 0.5pt}}}
\newcommand{\bluff} {{\hbox{\raise 15pt \hbox{\mm}}}}
\def\section{\@startsection {section}{1}{\z@}{-3.5ex plus -1ex minus
    -.2ex}{2.3ex plus .2ex}{\large\bf}}
\def\be{\begin{equation}}
\def\ee{\end{equation}}
\def \Z {{\mathbb{Z}}}
\def \R {{\mathbb{R}}}
\def \N {{\mathbb{N}}}
\begin{document}

\thispagestyle{empty}
\title[]{$L^p$-Strong solution for the stationary exterior Stokes equations with Navier boundary condition}
\date\today

\author{Dhifaoui Anis}
\address{UR Analysis and Control of PDEs, UR 13ES64, Department of Mathematics, Faculty of Sciences of Monastir, University of Monastir, Tunisia }
\email{anisdhifaoui123@gmail.com}

\begin{abstract}
This paper treats the stationary Stokes problem in exterior domain of $\R^3$ with Navier slip boundary condition. The behavior at infinity of the data and the solution are determined by setting the problem in $L^p$-spaces, for $p> 2$, with weights. The main results are the existence and uniqueness of strong solutions of the corresponding system.

\end{abstract}

\subjclass[2010]{76D07, 35J25, 76D03}
\keywords{Stokes equations, Navier boundary condition, Exterior domain, Weighted Sobolev spaces.}

\maketitle

\tableofcontents

 
\section{Introduction} \label{secintro}

We consider a simply-connected bounded domain $\Omega' \subset\R^3$, of class $\mathcal{C}^{2.1}$ with boundary $\Gamma$. Let $\Omega$ be an exterior domain given by $\Omega:=\R^3 \setminus \overline{\Omega'}$. The motion of viscous incompressible fluid in the exterior domain $\Omega$ around the obstacle $\Omega'$ is described by the Navier-Stokes equations, which are non-linear. The Stokes systems is a linear approximation of this model. Precisely, the velocity $\u$ of fluid and the pressure $\pi$ satisfy the following stationary Stokes problem: 
\begin{equation}
\label{ST}
\begin{split}
-\nu\Delta\textit{\textbf{u}}+\nabla\pi&=\textit{\textbf{f}}\quad\text{in}\quad\Omega,\\
\mathrm{div }\,\textit{\textbf{u}}&=0\quad\text{in}\quad\Omega,
\end{split}
\end{equation} 
where  $\textit{\textbf{f }}$ the external forces acting on the fluid, there are several possibilities of boundary conditions. Under the hypothesis of impermeability of the boundary, the velocity field $\u$ satisfies:
\begin{equation}\label{normal}
\begin{split}
\u\cdot\n=0\quad\text{in}\quad\Gamma,
\end{split}
\end{equation}
where $\n$ stands for the outer normal vector. According to the idea that the fluid cannot slip on the wall due to its viscosity, we get the no-slip condition:
\begin{equation}\label{tangential}
\begin{split}
\u_{\tau}=\mathbf{0}\quad\text{in}\quad\Gamma,
\end{split}
\end{equation}
where $\u_{\tau}=\u -(\u\cdot\n)\n$ denotes, as usual, the tangential component of $\u$. The problem~\eqref{ST} in an exterior domain with the Dirichlet boundary condition, is the combination of~\eqref{normal}--\eqref{tangential}, has been studied by many authors. We can mention~\cite{Alliot_M3AS_2000, Vivet_1994,Girault_M2AS_1992, Girault_ARMA_1991, 
Specovius_M2AS_1986, Specovius_AAM_1994} and references therein. However, there are many other kinds of boundary conditions which also match in the reality. In the physical applications, we are often encountering situations where the no-slip boundary conditions does not quite feasible. In this case, it is really important to introduce another boundary condition to describe the behavior of fluid on the wall. For example, hurricanes and tornadoes do slip along the ground and lose energy as they slip~\cite{Oliveira-07}. For the skin of sharks (\cite{Friedmann_JMFM_2010, Friedmann_JMFM_2011}) or golf balls in the case that the obstacle may have rough boundaries. Another application of interest can also be found, for instance, in aerodynamics in drag control of aircraft wings, in order to reduce the drag, small injection jets are introduced over the wings of the plane ~\cite{Achdou_collectionJLL_1998}. In 1827,  C. Navier~\cite{NAVIER}  was the first mathematician who considered the slip phenomena and proposed the following boundary conditions, called Navier-slip boundary conditions:
\begin{eqnarray}\label{Navier.BC}
\left\{\begin{array}{l}
\mathbf{u} \cdot \mathbf{n}=0 \\
2 [\mathbf{D}(\mathbf{u}) \mathbf{n}]_\tau +\alpha(\mathrm{x}) \mathbf{u}_\tau=0
\end{array} \quad \text { on } \quad \Gamma,\right.
\end{eqnarray}
\noindent where $\mathbf{D}(\u)=\frac{1}{2}\left(\nabla \mathbf{u}+\nabla^{T} \mathbf{u}\right)$, $\mathbf{n}$ and $\tau$ are unit outer normal vector and tangential vector of the boundary $\Gamma$ . In~\eqref{Navier.BC}, $\alpha(x)$ is a physical parameter, which can be a positive constant or a function in $L^{\infty}(\Gamma)$. Here, we consider the case that $\alpha(x)$ is constant which called the slip coefficient. The first condition in~\eqref{Navier.BC} is the no-penetration condition and the second condition expresses the fact that the tangential 
velocity, instead of being zero as in the slip condition~\eqref{tangential}, is proportional to the tangential stress. The Navier slip conditions have been extensively studied, see for instance~\cite{Achdou_CRAS_1995, Achdou_JCP_1998, Basson_CPAM_2008, 
Casado-Diaz_JDE_2003, Gerard_Varet_CMP_2010, Jager_JDE_2001, Beavers_JFM_1967, Solonnikov_TMIS_1973} and references therein. For the case of Navier boundary conditions without friction ($\alpha=0$), let us mention~\cite{Meslamani_2013, LMR_2020, Anis21}, where the following boundary conditions were used:
\begin{eqnarray}\label{Navier.flat.BC}
\left\{\begin{array}{l}
\mathbf{u} \cdot \mathbf{n}=0 \\
 \mathbf{curl}\,\mathbf{u}\times\mathbf{n}=0
\end{array} \quad \text { on } \quad \Gamma,\right.
\end{eqnarray}
where $\mathbf{curl }\textit{\textbf{u}}$ is the vorticity field. These conditions coincide with~\eqref{Navier.BC} on flat boundaries when $\alpha=0$.
They were also used in~\cite{Beirao_CPAA_2006} for the study of the non stationary Navier-Stokes equations in half-spaces of $\R^3$.
We finally refer to~\cite{Mulone_Meccanica_1983, Mulone_AnnMatPuraAppl_1985}
for the study of the non stationary problem of Navier-Stokes with mixed boundary conditions that include~\eqref{Navier.BC} without friction.\\

\noindent The problem~\eqref{ST}--~\eqref{Navier.BC} set in bounded domains has been well studied by various authors (see for instance~\cite{Ahmed_2014, Beirao_ADE_2004} or~\cite{Amrouche_M2AS_2016, Solonnikov_TMIS_1973} for the case $\alpha=0$
and references therein). Although in the exterior domain,  to the best of our knowledge, we can just mention~\cite{Russo_JDE_2011} where~\eqref{Navier.BC} was used for the stationary Navier-Stokes equations in exterior domains with also the assumption that the velocity tends to a non zero constant vector at infinity. But in the case that the velocity tends to a zero at infinity we can mention~\cite{DMR-2019, dhifaoui2021very}, where the authors studied the Stationary Stokes problem with Navier slip boundary condition in an exterior domain, they posed the problem in weighted spaces in order to control the infinite behavior of the solutions. They obtained in the Hilbertian framework, existence results, uniqueness of variational and strong solutions and another class of solutions called very weak solution for less regular data. \\

Since the domain $\Omega$ is unbounded, we set the problem in weighted Sobolev spaces, the weight functions are polynomials and enable to describe the growth or the decay of functions at infinity which allows to look for solutions of the problem with various behaviors at infinity this is one of the main advantages of the weighted Sobolev spaces. In this work, we study the problem~\eqref{ST}--\eqref{Navier.BC} in $L^p$-theory where $p>2$, we look for the strong solutions that have a different behavior at infinity. To that end, we combine results on the Stokes problem 
set in the whole space to catch the behavior at infinity and results in bounded domains to take into account the boundary conditions. \\

The paper is organized as follows. In Section~\ref{section 2}, we introduce the notations, the functional framework based on weighted Sobolev space. We recall some basic results concerning the Stokes in the whole space, we give a result concerning Laplace problem with Neumann boundary conditions. We end this
section by solving a mixed Stokes problem with Navier and Dirichlet boundary conditions. Finally, in Section~\ref{sect.string.solution}, we prove the existence and uniqueness of Strong solutions for the exterior Stokes problem~\eqref{ST}--\eqref{Navier.BC}. 
%

\section{Notations and Preliminary Results}\label{section 2}
\subsection{Notations}\quad\\
In this section, we recall the main notation which we shall use later. In what follows, $p$ is a real number in the interval $]1,\infty[$. The dual exponent of $p$ denoted by $p'$ is given by the relation $\frac{1}{p}+\frac{1}{p'}=1$. We will use bold characters for vector and matrix fields. A point in $\R^3$ is denoted by $\textbf{\textit{x}}=(x_1,x_2,x_3)$ and its
distance to the origin by
$$r=|\textbf{\textit{x}}|=\left(x_1^2+x_2^2+x_3^2\right)^{1/2}.$$
 For any multi-index $\boldsymbol{\lambda}\in\N^3$, we denote
by $\partial^{\boldsymbol{\lambda}}$ the differential operator of order
$\boldsymbol{\lambda}$,
$$D^{\boldsymbol{\lambda}}=
{\partial^{|\lambda|}\over\partial_1^{\lambda_1}\partial_2^{\lambda_2}\partial_3^{\lambda_3}},
\quad|\lambda|=\lambda_1+\lambda_2+\lambda_3.$$
\noindent We denote by $[s]$ the integer part of $s$. For any $k\in\Z$, $\mathcal{P}_k$ stands for the space of polynomials of degree less than or equal to $k$ and $\mathcal{P}^{\Delta}_{k}$ the harmonic polynomials of $\mathcal{P}_k$. If $k$ is a negative integer, we
set by convention $\mathcal{P}_k=\{0\}$. Let $\Omega'\subset\R^3$ is a simply connected bounded domain that has a boundary $\partial \Omega'=\Gamma$ of class $\mathcal{C}^{2.1}$ and let $\Omega $ be the complement of its closure in  $\R^{3}$. We denote by $\mathcal{D}(\Omega)$ the space of
$\mathcal{C}^{\infty}$ functions with compact support in $\Omega$, $\mathcal{D}(\overline{\Omega})$ the restriction to $\Omega$ of functions belonging to 
$\mathcal{D}(\R^3)$. 
We recall that $\mathcal{D}'(\Omega)$ is the well-known space of distributions defined on
$\Omega$. We recall that $L^p(\Omega)$ is the well-known Lebesgue real space and for $m\ge1$, we recall that $W^{m,p}(\Omega)$ is the well-known classical Sobolev space. 
We shall write $u\in
W_{loc}^{m,p}(\Omega)$ to mean that $u\in W^{m,p}(\mathcal{O})$, for any
bounded domain $\mathcal{O}$, with $\overline{\mathcal{O}}\subset\Omega$. In this work, we shall also denote by $B_{R}$ the open ball of radius $R>0$ centred at the origin with boundary $\partial B_R$. In particular, since $\Omega'$ is bounded, we can find some $R_{0}$, such that $\Omega'\subset B_{R_{0}}$ and we introduce, for any $R\geq R_{0}$, the set $\Omega_{R}=\Omega \cap B_{R}.$ If $X$ is a Banach space, with dual space $X'$, and $Y$ is a closed subspace of $X$, we denote by $X'\perp Y$ the subspace of $X'$ orthogonal to $Y$, i.e.
$$
X'\perp X
Y=\lbrace f \in X', \,\, \forall\, v \in Y,\,\, <f,v>=0\rbrace=(X/Y)'.
$$

\noindent Given $\textit{\textbf{A}}$ and $\textit{\textbf{B}}$ two matrices fields, such that $\textit{\textbf{A}}=(a_{ij})_{1\leqslant i,j\leqslant 3}$ 
and $\textit{\textbf{B}}=(b_{ij})_{1\leqslant i,j\leqslant 3}$, then we define 
$\textit{\textbf{A}}:\textit{\textbf{B}}=(a_{ij}b_{ij})_{1\leqslant i,j\leqslant 3}$.
Finally, as usual, $C>0$ denotes a generic constant the value
of which may change from line to line and even at the same line.\\

\subsection{Weighted Sobolev spaces}\quad\\
\noindent In order to control the behavior at infinity of our functions and distributions we use for basic weights the quantity  $\rho(\textbf{\textit{x}})=(1+r^2)^{1/2}$ which is equivalent to $r$ at infinity, and to one on any bounded subset of $\R^3$.\\
For $k\in\Z$, we introduce
$$W_k^{0,p}(\Omega)=\Big\{u\in\mathcal{D}'(\Omega),\,\rho^k u\in L^p(\Omega)\Big\},$$
which is a Banach space equipped with the norm: 
$$\|u\|_{W_k^{0,p}(\Omega)}=\|\rho^k u\|_{L^p(\Omega)}.$$ 
\noindent For any non-negative integers $m$, real numbers $p>1$ and $k\in \Z$. We define the weighted Sobolev space for $3/p+k\notin \{1,\cdots,m\}$:
$$
W_{k}^{m,p}(\Omega)=\Big\{u\in \mathcal{D}'(\Omega);\,\forall\boldsymbol{\lambda}\in\N^{3}:
\,0\leq |\boldsymbol{\lambda}| \leq m,\,\rho^{k-m+|\boldsymbol{\lambda}|}D^{\boldsymbol{\lambda}}u \in L^{p}(\Omega) \Big\}.
$$
It is a reflexive Banach space equipped with the norm:
$$
\|u\|_{W_{k}^{m,p}(\Omega)}= \left(\sum_{0\leqslant|\boldsymbol{\lambda}|\leqslant m}
\|\rho^{k-m+|\boldsymbol{\lambda}|}D^{\boldsymbol{\lambda}}u\|^{p}_{L^{p}(\Omega)}\right)^{1/p}.
$$
\noindent We define the semi-norm
$$|u|_{W_k^{m,p}(\Omega)}=\left(\sum_{|\boldsymbol{\lambda}|=m}\|\rho^k D^{\boldsymbol{\lambda}}u\|_{L^p(\Omega)}\right)^{1/p}.$$

\noindent Let us give some examples of such space that will be often used in the remaining of this work.
\begin{itemize}
\item[1)] For $m=1$, we have
\begin{align*}
{W_{k}^{1,p}(\Omega)}:={\{}&u \in \mathcal{D}'(\Omega);\,\rho^{k-1}u \in L^{p}(\Omega),\,\rho^{k}\,\nabla\,u \in L^{p}(\Omega) \}
\end{align*}

%
\item[2)] For $m=2$, we have
\begin{small}

\begin{equation*}
W_{k+1}^{2,p}(\Omega):=\left\lbrace u\in {W_{k}^{1,p}(\Omega)}, \rho^{k+1}\nabla^{2} u \in L^{p}(\Omega)\right\rbrace, 
\end{equation*}
\end{small}
\end{itemize}

Now, we present some basic properties on weighted Sobolev spaces. For more details, the reader can refer to  \cite{Amrouche_JMPA_1997, Amrouche_1994, Hanouzet}. 
\begin{properties}\quad
\begin{itemize}
\item The space $\mathcal{D}(\overline{\Omega})$ is dense in $W^{m,p}_k(\Omega)$.\\
\item For any $m\in\N^*$ and $3/p+k\neq 1$, we have the following continuous embedding:
\begin{equation}\label{inclusion.sobolev2}
W_{k}^{m,p}(\Omega)\hookrightarrow W_{k-1}^{m-1,p}(\Omega).
\end{equation}
\item For any $k$, $m\in\Z$ and for any $\lambda \in \N^3$, the mapping
\begin{equation}\label{derive.espaces.poids}
u \in W_{k}^{m,p}(\Omega)\longrightarrow\,\,\partial^{\lambda}u\in W_{k}^{m-|\lambda|,p}(\Omega)
\end{equation}
is continuous.\\
\item If $3/p+k\notin \{1,\cdots,m\}$, $3/p+k-\mu\notin \{1,\cdots,m\}$ and $m\in\Z$ the mapping
$$u\in W^{m,p}_{k}(\Omega)\longrightarrow \rho^{\mu}u\in W^{m,p}_{k-\mu}(\Omega)$$
is an isomorphism.
\end{itemize}
\end{properties}
The space $W_{k}^{m,p}(\Omega)$  sometimes contains some polynomial functions. Let $j$ be defined as follow:
\begin{equation}
j=\begin{cases}
[m-(3/p+k)] \quad\quad\mathrm{if}\quad 3/p+k \notin \Z^{-},\\
m-3/p-k-1\qquad\qquad \mathrm{otherwise}.
\end{cases}
 \end{equation}
 Then $\mathcal{P}_j$ is the space of all polynomials included in $W_{k}^{m,p}(\Omega)$.

\noindent The norm of the quotient space $W_{k}^{m,p}(\Omega)/\mathcal{P}_{j}$ is given by:
\begin{eqnarray*}
||u||_{W_{k}^{m,p}(\Omega)/\mathcal{P}_{j}}=\inf_{\mu\in\mathcal{P}_{j}}||u+\mu ||_{W_{k}^{m,p}(\Omega)}.
\end{eqnarray*}
All the local properties of $W_{k}^{m,p}(\Omega)$ coincide with those of the corresponding classical Sobolev spaces $W^{m,p}(\Omega)$. Hence, it also satisfies the usual trace theorems on the boundary $\Gamma$. Therefore, we can define the space

$$\mathring{W}_{k}^{m,p}(\Omega)=\lbrace u\in  W_{k}^{m,p}(\Omega),\,\,\, \gamma_{0}u=0,\,\gamma_{1}u=0,\,\cdots, \gamma_{m-1}u=0\rbrace.$$

\noindent Note that when $\Omega=\R^{3}$, we have $\mathring{W}_{k}^{m,p}(\R^{3})=W_{k}^{m,p}(\R^{3})$. The space $\mathcal{D}(\Omega)$ is dense in $\mathring{W}^{m,p}_k(\Omega)$. Therefore, the dual space of $\mathring{W}_{k}^{m,p}(\Omega)$, denoting  by $W_{-k}^{-m,p'}(\Omega)$, is a space of distributions with the norm
\begin{equation*}
||\,u\,||_{W_{-k}^{-m,p'}(\Omega)}=\sup_{v \in \mathring{W}_{k}^{m,p}(\Omega)}\dfrac{\left\langle u,v\right\rangle_{W_{-k}^{-m,p'}(\Omega)\times \mathring{W}_{k}^{m,p}(\Omega)} }{||\,v\,||_{W_{k}^{m,p}(\Omega)}}.
\end{equation*}

\noindent The proof of the following theorem can be found in \cite[Proposition 2.1]{Louati_Meslameni_Razafison}.
\begin{theo}\quad\\
Let $k$, $l$ be real numbers. Let $\lambda$ be a polynomial that belongs to $W^{1,p}_{k}(\Omega)+ W^{1,q}_{l}(\Omega)$. Then $\lambda$ belongs to $\mathcal{P}_{\gamma}$ where
\begin{equation*}
\gamma=\max \big([1-\frac{3}{p}-k],[1-\frac{3}{q}-l]\big).
\end{equation*}
\end{theo}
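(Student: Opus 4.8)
The plan is to characterize exactly which polynomials can live in the sum $W^{1,p}_{k}(\Omega) + W^{1,q}_{l}(\Omega)$ by reducing to the single-space case and then taking the larger of the two allowed degrees. First I would recall the fact stated just above the theorem: for a single weighted space $W^{1,p}_{k}(\Omega)$, the polynomials it contains are precisely those in $\mathcal{P}_{j}$ with $j = [\,1 - \tfrac{3}{p} - k\,]$ when $1 - \tfrac{3}{p} - k \notin \Z^{-}$ (and the modified formula otherwise), since for $m = 1$ the exponent controlling membership is exactly $[\,m - (3/p + k)\,]$. So a monomial $x^{\boldsymbol{\alpha}}$ of degree $d$ belongs to $W^{1,p}_{k}(\Omega)$ if and only if $\rho^{k-1}x^{\boldsymbol{\alpha}} \in L^{p}(\Omega)$ and $\rho^{k}\nabla x^{\boldsymbol{\alpha}} \in L^{p}(\Omega)$; since $\Omega$ is an exterior domain, this is governed purely by the decay/growth at infinity, i.e. by comparing $d$ against $1 - \tfrac{3}{p} - k$.

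The key step is then the following: suppose $\lambda \in W^{1,p}_{k}(\Omega) + W^{1,q}_{l}(\Omega)$ is a polynomial, so $\lambda = u + v$ with $u \in W^{1,p}_{k}(\Omega)$, $v \in W^{1,q}_{l}(\Omega)$. I would argue that $u$ and $v$ need not themselves be polynomials, but one can still bound the degree of $\lambda$. Write $\gamma_{p} = [\,1 - \tfrac{3}{p} - k\,]$ and $\gamma_{q} = [\,1 - \tfrac{3}{q} - l\,]$ (with the appropriate modification in the exceptional cases), and set $\gamma = \max(\gamma_{p},\gamma_{q})$; WLOG $\gamma = \gamma_{p}$. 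Since $\mathcal{P}_{\gamma_{q}} \subset \mathcal{P}_{\gamma}$ and, more importantly, since adding a polynomial of degree $\le \gamma$ does not leave the relevant spaces in a way that changes the argument, the cleanest route is: the space $\mathcal{P}_{\gamma}$ is finite-dimensional, and $W^{1,p}_{k}(\Omega) + W^{1,q}_{l}(\Omega)$ is a subspace of $\mathcal{D}'(\Omega)$ that contains $\mathcal{P}_{\gamma_{p}} + \mathcal{P}_{\gamma_{q}} = \mathcal{P}_{\gamma}$; one must show it contains \emph{no} polynomial of degree $> \gamma$. Suppose for contradiction $\lambda$ has degree $d > \gamma \ge \gamma_{p},\gamma_{q}$. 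Then $\lambda = u + v$; I would look at the top-degree homogeneous part. The homogeneous part of degree $d$ of $u$, if it existed as a genuine polynomial summand, would contradict $d > \gamma_{p}$; but $u$ is only in a weighted Sobolev space. The standard device (as in Amrouche–Girault–Giroire and Hanouzet) is to use that $W^{1,p}_{k}(\Omega)$ \emph{itself} contains only $\mathcal{P}_{\gamma_{p}}$, so $u - (\text{its polynomial part})$ has some genuine decay; then $\lambda - u = v$ forces, via the explicit behaviour at infinity of a polynomial of degree $d$ against the weight $\rho^{l-1}$ in $L^{q}$, that $d \le \gamma_{q}$, a contradiction unless $d \le \gamma$.

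Concretely, the argument I would write is: by the single-space characterization, decompose $u = P_{u} + \tilde u$ and $v = P_{v} + \tilde v$ with $P_{u} \in \mathcal{P}_{\gamma_{p}}$, $P_{v} \in \mathcal{P}_{\gamma_{q}}$, and $\tilde u \in W^{1,p}_{k}(\Omega)$, $\tilde v \in W^{1,q}_{l}(\Omega)$ each \emph{containing no nonzero polynomial} — indeed one can take $\tilde u, \tilde v$ in the closure of $\mathcal{D}(\Omega)$ or quotient out, so that they genuinely decay. Then $\lambda - P_{u} - P_{v} = \tilde u + \tilde v$ is simultaneously a polynomial and an element of $W^{1,p}_{k}(\Omega) + W^{1,q}_{l}(\Omega)$ with no polynomial part; hence it is zero, so $\lambda = P_{u} + P_{v} \in \mathcal{P}_{\gamma_{p}} + \mathcal{P}_{\gamma_{q}} = \mathcal{P}_{\max(\gamma_{p},\gamma_{q})} = \mathcal{P}_{\gamma}$, which is the claim. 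Finally I would note that conversely every element of $\mathcal{P}_{\gamma}$ does lie in the sum (take it entirely in whichever of the two spaces realizes the max), so the characterization is sharp.

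The main obstacle is making rigorous the step "$\tilde u + \tilde v$ is a polynomial with no polynomial part, hence zero": this requires knowing that an element of $W^{1,p}_{k}(\Omega)$ which decays (lies in the complement of $\mathcal{P}_{\gamma_{p}}$, e.g. in $\mathring{W}^{1,p}_{k}$ or in a topological complement) cannot equal a nonzero polynomial, and similarly for $q$, and then combining the two different weight exponents $(p,k)$ and $(q,l)$ — one cannot just compare within a single space. The clean way around this is to test the putative polynomial identity against the growth rates directly: a polynomial $Q$ of degree $d$ satisfies $Q \in W^{1,p}_{k}(\Omega)$ iff $d < 1 - 3/p - k$ in the generic case (equivalently $d \le \gamma_{p}$), and the decay of $\tilde u$ plus the decay of $\tilde v$ cannot reconstruct a polynomial of higher degree because each weighted integrability condition $\rho^{k-1}x^{\boldsymbol\alpha}\in L^p$, $\rho^{l-1}x^{\boldsymbol\alpha}\in L^q$ is a pure statement about the exponent of $r$ at infinity. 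This is exactly the computation carried out in \cite{Louati_Meslameni_Razafison} and in \cite{Amrouche_1994, Hanouzet}, so I would invoke those local-to-global weighted-space lemmas rather than redo the integrals.
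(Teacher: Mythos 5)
There is a genuine gap, and it sits exactly where you flag it. Your central device — decompose $u=P_u+\tilde u$ and $v=P_v+\tilde v$ along fixed complements of $\mathcal{P}_{\gamma_p}$ in $W^{1,p}_{k}(\Omega)$ and of $\mathcal{P}_{\gamma_q}$ in $W^{1,q}_{l}(\Omega)$, and then conclude that $Q:=\tilde u+\tilde v=\lambda-P_u-P_v$ ``is a polynomial with no polynomial part, hence zero'' — is circular, and the intermediate claim is in fact false as stated. Knowing that each of $\tilde u$, $\tilde v$ is not individually a nonzero polynomial says nothing about their sum: the statement ``a polynomial lying in $X_p+X_q$ must vanish (or at least have degree $\le\gamma$)'' is precisely the theorem you are trying to prove, restated for $Q$. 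It is even easy to produce $Q\neq 0$: write $\lambda=(\lambda-\phi)+\phi$ with $\phi\in\mathcal{D}(\overline\Omega)$; the polynomial components of $\phi$ with respect to the two different decompositions do not coincide, so $Q=\lambda-P_u-P_v$ is a nonzero polynomial of low degree. (Also, the complement of $\mathcal{P}_{\gamma_p}$ cannot be taken to be $\mathring{W}^{1,p}_{k}(\Omega)$; elements of the complement need not vanish on $\Gamma$, though this is a side issue.) So the whole decomposition adds nothing: everything reduces to excluding polynomials of degree $>\gamma$ from the sum space, which you only gesture at (``test against the growth rates'') and then outsource to the references.

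That excluded-degree step is the entire content of the theorem, and it must be done with both weights simultaneously, e.g.\ by a dyadic annulus estimate: if $\deg\lambda=d$, then for large $j$ one has $\int_{C_j}|\lambda|\,d\textbf{\textit{x}}\geq c\,2^{j(d+3)}$ on $C_j=\{2^j<|\textbf{\textit{x}}|<2^{j+1}\}$, while H\"older's inequality gives $\int_{C_j}|u|\,d\textbf{\textit{x}}\leq \|\rho^{k-1}u\|_{L^p(\Omega)}\,\|\rho^{1-k}\|_{L^{p'}(C_j)}\leq C\,2^{j(1-k+3/p')}$ and the analogous bound for $v$ with $(q,l)$; writing $\lambda=u+v$ and comparing exponents as $j\to\infty$ forces $d\leq\max\big(1-\tfrac 3p-k,\,1-\tfrac 3q-l\big)$, hence $d\leq\gamma$. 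Since you never carry out this (or an equivalent) computation, your argument as written does not prove the statement; it amounts to citing \cite[Proposition 2.1]{Louati_Meslameni_Razafison}, which is also all the paper itself does, so if a citation is acceptable your text should be pared down to that citation, and if a self-contained proof is intended the quantitative comparison above is the missing piece.
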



  
 
\noindent  We note that the vector-valued Laplace operator of a vector field $\textbf{\textit{v}}=(v_{1},\,v_{2},\,v_{3})$ is equivalently defined by
\begin{equation}\label{delta2}
\Delta\,\textbf{\textit{v}}=2\mathrm{div}\mathrm{\textbf{D}}(\textbf{\textit{v}})-\mathbf{grad}(\textbf{\textit{v}})
\end{equation}

\vspace{.2cm}
\noindent This leads to the following definition.
\begin{defi}{\rm 
For all integers $k \in \Z$ and $1<p<\infty$.
The space  ${H}_{k}^{p}(\mathrm{div},\Omega)$ is defined by
\begin{equation*}
 {H}_{k}^{p}(\mathrm{div},\Omega)=\left\lbrace \textbf{\textit{v}}\in {W}_{k}^{0,p}(\Omega);\mathrm{div} \,\, \textbf{\textit{v}}\in W_{k+1}^{0,p}(\Omega)\right\rbrace \,,
\end{equation*}
and is provided with the norm
$$ 
\|\textbf{\textit{v}}\|_{{H}_{k}^{p}(\mathrm{div},\Omega)}=\left( \|\textbf{\textit{v}}\|^{p}_{{W}_{k}^{0,p}(\Omega)}+\|\mathrm{div} \,\, \textbf{\textit{v}}\|^{p}_{{W}_{k+1}^{0,p}(\Omega)}\right) ^{\frac{1}{p}}.
 $$
\vspace{.2cm}
\noindent
This definition will be also used with $\Omega$ replaced by ${\R}^{3}$.
\vspace{.3cm}
}
\end{defi}
\noindent The argument used by Hanouzet (see \cite{Hanouzet_1971}) to prove the density of $\mathcal{D}(\overline{\Omega})$ in $W_{k}^{m,p}(\Omega)$ can be adapted to establish that $\boldsymbol{\mathcal{D}}(\overline{\Omega})$ is dense in ${H}_{k}^{p}(\mathrm{div},\Omega)$. Therefore, denoting by $\textbf{\textit{n}}$ the exterior unit normal to the boundary $\Gamma$, the normal trace $\textbf{\textit{v}}\cdot \textbf{\textit{n}}$ can be defined in $W^{-1/p,p}(\Gamma)$ for the functions of ${H}_{k}^{p}(\mathrm{div},\Omega)$, where $W^{-1/p,p}(\Gamma)$ denotes the dual space of $W^{1/p,p'}(\Gamma)$. They satisfy the trace theorems; i.e, there exists a constant $C$ such that 
\begin{eqnarray}
\forall\textbf{\textit{v}}\in {H}_{k}^{p}(\mathrm{div},\Omega),\quad \Vert\textbf{\textit{v}}\cdot\textbf{\textit{n}}\Vert_{{\textit{W}}^{-1/p,p}(\Gamma)}\leqslant C\Vert\textbf{\textit{v}}\Vert_{\textbf{\textit{H}}_{k}^{p}(\mathrm{div},\Omega)}.
\end{eqnarray}
In addition, the following Green's formulas holds: For any $\textbf{\textit{v}}\in {H}_{k}^{p}(\mathrm{div},\Omega)$ and ${\varphi}\in {{W}} _{-k}^{1,p'}(\Omega)$
\begin{equation}\label{FG1}
 \displaystyle \int_{\Omega} \textbf{\textit{v}} \cdot \nabla {\varphi} \,d\textbf{x}+ \displaystyle \int_{\Omega} {\varphi}\,\mathrm{div}\,  \textbf{\textit{v}}\, d\textbf{x}=\left\langle \textbf{\textit{v}} \cdot \textbf{\textit{n}},\varphi \right\rangle _{{W}^{-1/p,p}(\Gamma) \times {W}^{1/p,p'}(\Gamma)}.
\end{equation}
\vspace{.3cm}
The closures of $\boldsymbol{\mathcal{D}}({\Omega})$ in ${H}_{k}^{p}(\mathrm{div},\Omega)$ is denoted by $\mathring{H}_{k}^{p}(\mathrm{div},\Omega)$  and can be characterized by
\begin{eqnarray*}
\mathring{H}_{k}^{p}(\mathrm{div},\Omega)=\lbrace \textbf{\textit{v}}\in {H}_{k}^{p}(\mathrm{div},\Omega): \textbf{\textit{v}}\cdot\textbf{\textit{n}}=0\,\text{ on }\, \Gamma\rbrace.
\end{eqnarray*}
The proof of the following result can be found in~\cite{dhifa2020}:
\begin{prop}\label{prop11}
A distribution  $\textbf{\textit{f}}$ belongs to $[\mathring{{H}}_{k}^{p}(\mathrm{div},\Omega)]'$ if and only if there exist $\boldsymbol{\psi}\in {W}^{0,p}_{-k}(\Omega)$ and $\boldsymbol{\chi}\in {W}^{0,p}_{-k-1}(\Omega)$, such that $\textbf{\textit{f}}=\psi+\nabla\chi$. Moreover
\begin{eqnarray*}
\Vert \boldsymbol{\psi}\Vert_{{W}^{0,p}_{-k}(\Omega)} + \Vert \boldsymbol{\chi}\Vert_{{W}^{0,p}_{-k-1}(\Omega)}\leq C \Vert \textbf{\textit{f}}\Vert_{[\mathring{H}_{k}^{p}(\mathrm{div},\Omega)]'}.
\end{eqnarray*}
\end{prop}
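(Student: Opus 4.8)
\textbf{Plan for the proof of Proposition \ref{prop11}.}
The statement is a duality characterization of $[\mathring{H}_{k}^{p}(\mathrm{div},\Omega)]'$, in the same spirit as the classical description of $[\mathring{H}(\mathrm{div})]'$ via $L^p + \nabla(L^p)$. The natural route is a Banach-space embedding / closed-range argument. The plan is to realize $\mathring{H}_{k}^{p}(\mathrm{div},\Omega)$ isometrically as a closed subspace of a product of two weighted Lebesgue spaces, then invoke Hahn--Banach to extend a given functional, and finally use the Riesz representation of the dual of that product space together with the density of $\boldsymbol{\mathcal{D}}(\Omega)$ to identify the two pieces $\boldsymbol\psi$ and $\nabla\chi$.

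\emph{Step 1: the canonical embedding.} Consider the map
$$
T:\mathring{H}_{k}^{p}(\mathrm{div},\Omega)\longrightarrow W^{0,p}_{k}(\Omega)\times W^{0,p}_{k+1}(\Omega),\qquad
T\textbf{\textit{v}}=(\textbf{\textit{v}},\mathrm{div}\,\textbf{\textit{v}}).
$$
By the very definition of the norm on $H_{k}^{p}(\mathrm{div},\Omega)$, the map $T$ is a linear isometry onto its image $E:=T(\mathring{H}_{k}^{p}(\mathrm{div},\Omega))$, and $E$ is closed in the product space because $\mathring{H}_{k}^{p}(\mathrm{div},\Omega)$ is complete. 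The dual of the product is $W^{0,p'}_{-k}(\Omega)\times W^{0,p'}_{-k-1}(\Omega)$, using that the dual of $W^{0,p}_{m}(\Omega)$ is $W^{0,p'}_{-m}(\Omega)$ (pairing with the weight absorbed, i.e. $\langle g,u\rangle=\int_\Omega gu$, which is well defined since $(\rho^{m}u)(\rho^{-m}g)\in L^1$).

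\emph{Step 2: Hahn--Banach extension.} Let $\textbf{\textit{f}}\in[\mathring{H}_{k}^{p}(\mathrm{div},\Omega)]'$. Transporting through the isometry $T$, $\textbf{\textit{f}}$ defines a continuous functional on $E$; extend it by Hahn--Banach to a functional $F$ on all of $W^{0,p}_{k}(\Omega)\times W^{0,p}_{k+1}(\Omega)$ with the same norm. By Step 1's identification of the dual, there exist $\boldsymbol\psi\in W^{0,p'}_{-k}(\Omega)$ and $\boldsymbol\chi\in W^{0,p'}_{-k-1}(\Omega)$ — wait, the indices must be read with the convention $p\leftrightarrow p'$ flipped relative to the statement; writing the proposition for exponent $p$ means the representatives live in the $p$-scale, so one applies the argument with $p$ and $p'$ interchanged to land exactly on $\boldsymbol\psi\in{W}^{0,p}_{-k}(\Omega)$, $\boldsymbol\chi\in{W}^{0,p}_{-k-1}(\Omega)$ — such that
$$
\langle \textbf{\textit{f}},\textbf{\textit{v}}\rangle=\int_\Omega \boldsymbol\psi\cdot\textbf{\textit{v}}\,d\textbf{x}+\int_\Omega \boldsymbol\chi\,\mathrm{div}\,\textbf{\textit{v}}\,d\textbf{x}\qquad\text{for all }\textbf{\textit{v}}\in\mathring{H}_{k}^{p}(\mathrm{div},\Omega),
$$
with $\|\boldsymbol\psi\|_{{W}^{0,p}_{-k}}+\|\boldsymbol\chi\|_{{W}^{0,p}_{-k-1}}\le C\|\textbf{\textit{f}}\|_{[\mathring{H}_{k}^{p}(\mathrm{div},\Omega)]'}$, which is the claimed estimate.

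\emph{Step 3: identifying the second term as $\nabla\chi$.} It remains to pass from the pairing $\int_\Omega\boldsymbol\chi\,\mathrm{div}\,\textbf{\textit{v}}$ to $\langle\nabla\chi,\textbf{\textit{v}}\rangle$. Restrict the identity to $\textbf{\textit{v}}\in\boldsymbol{\mathcal{D}}(\Omega)$: then $\int_\Omega\boldsymbol\chi\,\mathrm{div}\,\textbf{\textit{v}}=-\langle\nabla\boldsymbol\chi,\textbf{\textit{v}}\rangle$ in the sense of distributions, so on $\boldsymbol{\mathcal{D}}(\Omega)$ we get $\textbf{\textit{f}}=\boldsymbol\psi-\nabla\boldsymbol\chi$; replacing $\boldsymbol\chi$ by $-\boldsymbol\chi$ (harmless, same space and norm) gives $\textbf{\textit{f}}=\boldsymbol\psi+\nabla\boldsymbol\chi$ as an equality of distributions on $\Omega$. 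Since $\boldsymbol{\mathcal{D}}(\Omega)$ is dense in $\mathring{H}_{k}^{p}(\mathrm{div},\Omega)$ and both sides are continuous on that space, the representation extends to all of $\mathring{H}_{k}^{p}(\mathrm{div},\Omega)$; note the boundary term in Green's formula \eqref{FG1} drops precisely because $\textbf{\textit{v}}\cdot\textbf{\textit{n}}=0$ on $\Gamma$ for elements of $\mathring{H}_{k}^{p}(\mathrm{div},\Omega)$. Conversely, if $\textbf{\textit{f}}=\boldsymbol\psi+\nabla\boldsymbol\chi$ with $\boldsymbol\psi\in{W}^{0,p}_{-k}(\Omega)$, $\boldsymbol\chi\in{W}^{0,p}_{-k-1}(\Omega)$, then for $\textbf{\textit{v}}\in\mathring{H}_{k}^{p}(\mathrm{div},\Omega)$ one has $\langle\textbf{\textit{f}},\textbf{\textit{v}}\rangle=\int_\Omega\boldsymbol\psi\cdot\textbf{\textit{v}}-\int_\Omega\boldsymbol\chi\,\mathrm{div}\,\textbf{\textit{v}}$, and by Hölder with the matching weights $\rho^{-k}$ on $\boldsymbol\psi$ versus $\rho^{k}$ on $\textbf{\textit{v}}$, and $\rho^{-k-1}$ on $\boldsymbol\chi$ versus $\rho^{k+1}$ on $\mathrm{div}\,\textbf{\textit{v}}$, this is bounded by $C(\|\boldsymbol\psi\|_{{W}^{0,p}_{-k}}+\|\boldsymbol\chi\|_{{W}^{0,p}_{-k-1}})\|\textbf{\textit{v}}\|_{{H}_{k}^{p}(\mathrm{div},\Omega)}$, so $\textbf{\textit{f}}\in[\mathring{H}_{k}^{p}(\mathrm{div},\Omega)]'$.

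\emph{Main obstacle.} The delicate point is bookkeeping the weight exponents so that the duality $[{W}^{0,p}_{m}(\Omega)]'={W}^{0,p'}_{-m}(\Omega)$ is applied with the right shift ($m=k$ for $\boldsymbol\psi$, $m=k+1$ for $\boldsymbol\chi$) and so that the final representatives land in the spaces stated in the proposition rather than their primed counterparts; one must also make sure no spurious polynomial/compatibility condition is needed here, which is the case precisely because we work with $\mathring{H}_{k}^{p}(\mathrm{div},\Omega)$ (the closure of $\boldsymbol{\mathcal{D}}(\Omega)$) and $W^{0,p}$-type spaces contain no polynomials, so the embedding $E$ into the product is genuinely closed and Hahn--Banach applies without obstruction.
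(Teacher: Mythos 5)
Your overall strategy is the standard and correct one for this kind of statement, and it is presumably what the cited reference does: the paper itself gives no proof of Proposition \ref{prop11}, it only refers to the thesis \cite{dhifa2020}. The isometric embedding $T\textbf{\textit{v}}=(\textbf{\textit{v}},\mathrm{div}\,\textbf{\textit{v}})$ onto a closed subspace of the product, Hahn--Banach, the Riesz-type identification of the dual of the product of weighted Lebesgue spaces, and then the identification of the second term as a distributional gradient using the density of $\boldsymbol{\mathcal{D}}(\Omega)$ in $\mathring{H}_{k}^{p}(\mathrm{div},\Omega)$ and the Green formula \eqref{FG1} (the boundary term vanishing because $\textbf{\textit{v}}\cdot\textbf{\textit{n}}=0$) is exactly the classical route, and Steps 1 and 3 are sound.

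The one point you should not paper over is the $p$ versus $p'$ bookkeeping, because it is not cosmetic. Dualizing $W^{0,p}_{k}(\Omega)\times W^{0,p}_{k+1}(\Omega)$ produces representatives $\boldsymbol{\psi}\in W^{0,p'}_{-k}(\Omega)$ and $\chi\in W^{0,p'}_{-k-1}(\Omega)$; your parenthetical ``interchange $p$ and $p'$'' does not repair the mismatch with the printed statement, because interchanging the exponents changes the space being dualized: that argument characterizes $[\mathring{H}_{k}^{p'}(\mathrm{div},\Omega)]'$, not $[\mathring{H}_{k}^{p}(\mathrm{div},\Omega)]'$. The same issue resurfaces in your converse: H\"older requires \emph{conjugate} exponents, so $\int_\Omega\boldsymbol{\psi}\cdot\textbf{\textit{v}}$ and $\int_\Omega\chi\,\mathrm{div}\,\textbf{\textit{v}}$ are only controlled when $\boldsymbol{\psi},\chi$ lie in the $p'$-scale while $\textbf{\textit{v}}$ lies in the $p$-scale; with both in the $p$-scale (as the statement is printed, and as you wrote the converse) the pairings are not even defined for $p\neq 2$. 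In other words, the statement as printed appears to carry a typo (it is the $p=2$ version of \cite{DMR-2019} transcribed verbatim), and your proof in fact establishes the corrected version: $\textbf{\textit{f}}\in[\mathring{H}_{k}^{p}(\mathrm{div},\Omega)]'$ if and only if $\textbf{\textit{f}}=\boldsymbol{\psi}+\nabla\chi$ with $\boldsymbol{\psi}\in W^{0,p'}_{-k}(\Omega)$, $\chi\in W^{0,p'}_{-k-1}(\Omega)$ (equivalently, the printed spaces if one dualizes $\mathring{H}_{k}^{p'}(\mathrm{div},\Omega)$). Say this explicitly rather than folding it into an aside; with that adjustment your argument is complete.
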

\noindent 

\subsection{The Stokes problem in the whole space $\R^3$}\quad\\
\noindent We recall here some basic results concerning the Stokes problem in $\R^{3}$: 

  \begin{equation}\label{Stokes in R^3}
  -\Delta\,\textbf{\textit{u}}+\nabla\,{\pi}=\textbf{\textit{f}}\quad\text{and}\quad
\mathrm{div}\,\textbf{\textit{u}}={\chi}\quad\text{in}\quad\R^3\, .
  \end{equation}
  These results can be found in~\cite{Alliot_M3AS_1999}. Let us first introduce the kernel of the Stokes operator
  \begin{eqnarray*}
\mathcal{N}_{k}^{p}(\R^3)=\Big\lbrace(\textbf{\textit{u}},{\pi})\in W^{1,p}_{k}(\R^3)\times{{W}}^{0,p}_{k}(\R^3),\,\,\,-\Delta\,\textbf{\textit{u}}+\nabla\,{\pi}=\boldsymbol{0}\,\,\text{ and }\,\, \mathrm{div}\,\textbf{\textit{u}}=0\,\,\text{ in }\,\R^3\Big\rbrace 
\end{eqnarray*}
and the space of polynomials
\begin{eqnarray*}
N_{k}=\Big\lbrace(\boldsymbol{\lambda},\mu)\in \mathcal{P}_{k}\times\mathcal{P}^{\Delta}_{k-1},\,\,\,-\Delta\,\boldsymbol{\lambda}+\nabla\,\mu
=\boldsymbol{0}\,\,\text{ and }\,\, \mathrm{div}\,\boldsymbol{\lambda}=0\Big\rbrace .
\end{eqnarray*}
Recall that by agreement on the notation $\mathcal{P}_{k}$, the space $N_{k}=\lbrace (\boldsymbol{0},0)\rbrace$ when $k<0$ and $N_{0}=\mathcal{P}_{0}\times\lbrace 0\rbrace$.\\

 The next proposition characterizes the kernel of~\eqref{Stokes in R^3}.
\begin{prop}\label{caracterisation du noyau dans R^3}
Let $1<p<\infty$ and k be integers. Then $\mathcal{N}_{k}^{p}(\R^3)=N_{[1-k-3/p]}$. In particular, $\mathcal{N}_{k}^{p}(\R^3)=\lbrace(\mathbf{0},0)\rbrace$ if $k>1-3/p$.
\end{prop}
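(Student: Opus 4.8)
The plan is to show that any $(\mathbf{u},\pi)\in\mathcal N_k^p(\R^3)$ is in fact a polynomial pair, and then to identify which polynomials survive. First I would apply the known existence/uniqueness results for the Stokes problem in $\R^3$ (from~\cite{Alliot_M3AS_1999}, recalled just above): the Stokes operator acting on $W^{1,p}_k(\R^3)\times W^{0,p}_k(\R^3)$ is an isomorphism onto its image modulo a finite-dimensional kernel, and, crucially, solutions of the homogeneous problem with data zero that lie in a weighted space with a \emph{large} weight index are forced to vanish. The standard device is a bootstrap in the weight: since $(\mathbf{u},\pi)\in W^{1,p}_k\times W^{0,p}_k$ solves the homogeneous Stokes system, elliptic regularity in $\R^3$ (the system is hypoelliptic, indeed $\mathbf{u}$ is biharmonic-type and $\pi$ is harmonic after taking divergence of the first equation: $\Delta\pi=\mathrm{div}\,\mathbf f=0$, and then $\Delta^2\mathbf u=0$) shows $(\mathbf{u},\pi)\in C^\infty(\R^3)$, and the weighted integrability controls the growth at infinity.

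Next I would invoke the Liouville-type theorem for the whole space: a tempered distribution that is biharmonic (resp. harmonic) and lies in $W^{1,p}_k(\R^3)$ (resp. $W^{0,p}_k(\R^3)$) must be a polynomial, of degree controlled by the weight exponent. Concretely, membership of $\mathbf u$ in $W^{1,p}_k(\R^3)$ forces any polynomial component to have degree at most $[1-k-3/p]$ (this is exactly the threshold $j$ appearing in the discussion of polynomials contained in $W^{m,p}_k$ earlier in Section~\ref{section 2}, specialized to $m=1$, $\Omega=\R^3$), and similarly $\pi\in W^{0,p}_k(\R^3)$ forces its polynomial degree to be at most $[-k-3/p]=[1-k-3/p]-1$. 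Combining, the pair $(\mathbf u,\pi)$ is a polynomial solution of the homogeneous Stokes equations with $\mathbf u$ of degree $\le [1-k-3/p]$ and $\pi$ of degree $\le[1-k-3/p]-1$; moreover, taking the divergence of $-\Delta\mathbf u+\nabla\pi=\mathbf 0$ gives $\Delta\pi=0$, so $\pi$ is in fact a harmonic polynomial in $\mathcal P^\Delta_{[1-k-3/p]-1}$, together with $\mathrm{div}\,\mathbf u=0$. That is precisely the definition of $N_{[1-k-3/p]}$, giving $\mathcal N_k^p(\R^3)\subseteq N_{[1-k-3/p]}$. The reverse inclusion is immediate: any element of $N_{[1-k-3/p]}$ is by construction a polynomial solving the homogeneous system, and polynomials of degree $\le[1-k-3/p]$ lie in $W^{1,p}_k(\R^3)$ (and the pressure component in $W^{0,p}_k(\R^3)$) by the polynomial-inclusion characterization of weighted spaces.

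Finally, for the ``in particular'' clause: if $k>1-3/p$ then $1-k-3/p<0$, so $[1-k-3/p]<0$ in the sense of the convention $\mathcal P_k=\{0\}$ for negative $k$, whence $N_{[1-k-3/p]}=\{(\mathbf 0,0)\}$.

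The main obstacle I expect is not the polynomial bookkeeping but justifying the Liouville step cleanly: one must argue that a solution of the homogeneous Stokes system lying in the weighted Sobolev space really is a \emph{polynomial} (not merely smooth with polynomial growth in an averaged sense), which requires either citing the precise whole-space result from~\cite{Alliot_M3AS_1999} or reproducing the argument that a biharmonic/harmonic tempered distribution with controlled weighted growth is a polynomial and then reading off the admissible degree from the weight index. Getting the endpoint condition $3/p+k\notin\{1\}$ (needed so that $W^{1,p}_k(\R^3)$ behaves well and the degree count via $[1-k-3/p]$ is the correct one) also needs a word, though here $k$ is an integer so this is typically harmless and already folded into the standing hypotheses.
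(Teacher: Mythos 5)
Your argument is correct and is essentially the proof of the cited reference: the paper itself gives no proof of this proposition, referring to Alliot--Amrouche \cite{Alliot_M3AS_1999}, where exactly your route is followed (take divergence to get $\Delta\pi=0$ and $\Delta^2\textbf{\textit{u}}=\boldsymbol{0}$, use that harmonic/biharmonic tempered distributions are polynomials, then read off the admissible degrees from the polynomial content of $W^{1,p}_k(\R^3)$ and $W^{0,p}_k(\R^3)$, with the reverse inclusion by the same polynomial-inclusion property). The only point to keep in mind is the exceptional case $3/p+k\in\Z^-$ (i.e. $p\in\{3/2,3\}$), where the polynomial threshold drops by one; this caveat is inherent to the statement's conventions rather than a defect of your argument.
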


\noindent The next theorem states an existence, uniqueness and regularity result for problem~\eqref{Stokes in R^3}.

\begin{theo}
\label{theorem de Stokes dans R^3}
Assume that $k\in\Z$ and $1<p<\infty$. If $(\textbf{\textit{f}},\chi)\in W^{-1,p}_{k}(\R^3)\times W^{0,p}_{k}(\R^3)$ satisfies the compatibility condition:
\begin{equation}
\label{CCR^3}
\forall (\boldsymbol{\lambda},\mu)\in N_{[1-3/p'+k]},\quad \langle\textbf{\textit{f}},\boldsymbol{\lambda}\rangle_{W^{-1,p}_{k}(\R^3)\times W^{1,p'}_{-k}(\R^3)} 
-\langle \chi,\mu\rangle_{W^{0,p}_{k}(\R^3)\times W^{0,p'}_{-k}(\R^3)}=0,
\end{equation}
then problem \eqref{Stokes in R^3} has a solution $(\textbf{\textit{u}}, \pi)\in W^{1,p}_{k}(\R^3)\times W^{0,p}_{k}(\R^3)$ unique up to an element of $N_{[1-3/p-k]}$ and we have the estimate
\begin{eqnarray*}\label{estimation du problem in R^3}
\inf _{(\boldsymbol{\lambda},{\mu})\in N_{[1-3/p-k]}}\left( \Vert \textbf{\textit{u}}+\boldsymbol{\lambda}\Vert_{W^{1,p}_{k}(\R^3)}+\Vert {\pi}+{\mu}\Vert_{{{W}}^{0,p}_{k}(\R^3)}\right) \leqslant C\Big(\Vert \textbf{\textit{f }}\Vert_{W^{-1,p}_{k}(\R^3)}+\Vert \chi\Vert_{W^{0,p}_{k}(\R^3)}\Big).
\end{eqnarray*}
Furthermore, if $(\textbf{\textit{f}},\chi)\in W^{0,p}_{k+1}(\R^3)\times W^{1,p}_{k+1}(\R^3)$, then $(\textbf{\textit{u}}, \pi)\in W^{2,p}_{k+1}(\R^3)\times W^{1,p}_{k+1}(\R^3)$.
\end{theo}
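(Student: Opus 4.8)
The plan is to decouple system \eqref{Stokes in R^3} into scalar Laplace problems in $\R^{3}$ and to invoke the weighted $L^{p}$ theory of the Laplacian (as developed in \cite{Amrouche_JMPA_1997, Amrouche_1994}), tracking carefully the polynomial kernels and compatibility conditions so that they collapse to the kernels $N_{[1-3/p\pm k]}$ appearing in the statement. First I would recover the pressure: taking the divergence of the momentum equation and using $\mathrm{div}\,\Delta\textbf{\textit{u}}=\Delta\,\mathrm{div}\,\textbf{\textit{u}}=\Delta\chi$ gives
\begin{equation*}
\Delta\pi=\mathrm{div}\,\textbf{\textit{f}}+\Delta\chi\quad\text{in}\quad\R^{3}.
\end{equation*}
By the continuity of differentiation between weighted spaces \eqref{derive.espaces.poids}, the right-hand side lies in $W^{-2,p}_{k}(\R^{3})$, so the isomorphism theorem for $\Delta$ on weighted Sobolev spaces produces $\pi\in W^{0,p}_{k}(\R^{3})$, unique up to a harmonic polynomial in the appropriate $\mathcal{P}^{\Delta}$–class, provided the data are orthogonal to a suitable space of harmonic polynomials — a condition I would recognise as the ``pressure part'' of \eqref{CCR^3}.

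With $\pi$ fixed, the next step is to set $\textbf{\textit{g}}:=\textbf{\textit{f}}-\nabla\pi\in W^{-1,p}_{k}(\R^{3})$ and to solve $-\Delta\textbf{\textit{u}}=\textbf{\textit{g}}$ componentwise; the scalar weighted Laplace theory then yields $\textbf{\textit{u}}\in W^{1,p}_{k}(\R^{3})$, unique up to an element of $\mathcal{P}_{[1-3/p-k]}$, under an orthogonality condition on $\textbf{\textit{g}}$ that becomes, after integrating the $\nabla\pi$ contribution by parts, the ``velocity part'' of \eqref{CCR^3}. It remains to check that this $\textbf{\textit{u}}$ satisfies the constraint: writing $z:=\mathrm{div}\,\textbf{\textit{u}}-\chi\in W^{0,p}_{k}(\R^{3})$ and using the pressure equation, one finds $\Delta z=\Delta\,\mathrm{div}\,\textbf{\textit{u}}-\Delta\chi=-\mathrm{div}\,\textbf{\textit{g}}-\Delta\chi=-\mathrm{div}\,\textbf{\textit{f}}+\Delta\pi-\Delta\chi=0$, so $z$ is a harmonic polynomial lying in $W^{0,p}_{k}(\R^{3})$; I would absorb it by correcting $\pi$, hence $\textbf{\textit{u}}$, by a polynomial taken from the available kernel ambiguity, after which $(\textbf{\textit{u}},\pi)$ solves the full coupled system.

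Proposition~\ref{caracterisation du noyau dans R^3} then identifies the remaining non-uniqueness with $N_{[1-3/p-k]}$, and the displayed a priori estimate follows from the open mapping theorem applied to the induced bounded bijection between the relevant quotient spaces. For the regularity part, if in addition $\textbf{\textit{f}}\in W^{0,p}_{k+1}(\R^{3})$ and $\chi\in W^{1,p}_{k+1}(\R^{3})$, then $\mathrm{div}\,\textbf{\textit{f}}+\Delta\chi\in W^{-1,p}_{k+1}(\R^{3})$, so the higher-order isomorphism for $\Delta$ gives $\pi\in W^{1,p}_{k+1}(\R^{3})$; consequently $\textbf{\textit{g}}=\textbf{\textit{f}}-\nabla\pi\in W^{0,p}_{k+1}(\R^{3})$ and the $W^{2,p}$-regularity of the Laplacian yields $\textbf{\textit{u}}\in W^{2,p}_{k+1}(\R^{3})$; by uniqueness this solution coincides with the one built above.

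The main obstacle I anticipate is the index and kernel bookkeeping in the first two steps: one must choose the pressure and then the velocity inside their respective polynomial-kernel ambiguities so that $\mathrm{div}\,\textbf{\textit{u}}=\chi$ holds exactly, not merely modulo a harmonic polynomial, and at the same time verify that the two scalar orthogonality conditions arising from the Laplace solvability are exactly encoded by the single compatibility condition \eqref{CCR^3} associated with $N_{[1-3/p+k]}$. Keeping the exponents $[1-3/p-k]$ and $[1-3/p'+k]$ consistent across the various invocations of the weighted Laplace theory, and matching them against the Stokes kernels $N_{[1-3/p\pm k]}$, is where the real care is required.
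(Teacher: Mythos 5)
The paper itself contains no proof of this theorem: it is quoted from \cite{Alliot_M3AS_1999}, and the argument there is in essence the decoupling you describe (pressure via $\Delta\pi=\mathrm{div}\,\textbf{\textit{f}}+\Delta\chi$, then a componentwise Laplace solve, built on the weighted theory of \cite{Amrouche_1994}), so your route is the intended one. The only point where your proposal is genuinely incomplete is exactly the one you flag and defer: the scalar Laplace solve for $-\Delta\textbf{\textit{u}}=\textbf{\textit{f}}-\nabla\pi$ in $W^{1,p}_{k}(\R^3)$ requires orthogonality of the datum to \emph{every} vector field $\boldsymbol{\lambda}$ whose components are harmonic polynomials of degree at most $j:=[1-3/p'+k]$ — a set much larger than the set of first components of elements of $N_{j}$, which are divergence free — so it is not a priori clear that \eqref{CCR^3} suffices, and this verification is the real mathematical content of the reduction. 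It does close, as follows. For such a $\boldsymbol{\lambda}$, the polynomial $d:=\mathrm{div}\,\boldsymbol{\lambda}$ is harmonic of degree at most $j-1$; choose a polynomial $q$ of degree at most $j+1$ with $\Delta q=d$ (for $d$ homogeneous of degree $m$, take $q=|x|^{2}d/(4m+6)$). Testing the pressure equation against $q$ gives $\langle\pi,d\rangle=\langle\Delta\pi,q\rangle=-\langle\textbf{\textit{f}},\nabla q\rangle+\langle\chi,d\rangle$, whence
\begin{equation*}
\langle \textbf{\textit{f}}-\nabla\pi,\boldsymbol{\lambda}\rangle
=\langle \textbf{\textit{f}},\boldsymbol{\lambda}\rangle+\langle\pi,d\rangle
=\langle \textbf{\textit{f}},\tilde{\boldsymbol{\lambda}}\rangle-\langle\chi,\tilde{\mu}\rangle,
\qquad \tilde{\boldsymbol{\lambda}}:=\boldsymbol{\lambda}-\nabla q,\quad \tilde{\mu}:=-d,
\end{equation*}
and $(\tilde{\boldsymbol{\lambda}},\tilde{\mu})$ belongs to $N_{j}$ because $\mathrm{div}\,\tilde{\boldsymbol{\lambda}}=d-\Delta q=0$ and $-\Delta\tilde{\boldsymbol{\lambda}}+\nabla\tilde{\mu}=\nabla d-\nabla d=0$; similarly the compatibility condition for the pressure problem is \eqref{CCR^3} evaluated at the pairs $(\nabla q,0)\in N_{j}$ with $q$ harmonic. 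So both decoupled orthogonality conditions are consequences of \eqref{CCR^3}, and all the duality pairings above are legitimate since $\deg q\le[2-3/p'+k]$ and $\deg\nabla q\le j$. Your remaining step — removing the harmonic polynomial $z=\mathrm{div}\,\textbf{\textit{u}}-\chi\in\mathcal{P}^{\Delta}_{[-3/p-k]}$ — should also be made explicit: replace $(\textbf{\textit{u}},\pi)$ by $(\textbf{\textit{u}}+\nabla\theta,\pi-z)$ with $\Delta\theta=-z$, which preserves the momentum equation and corrects the divergence while staying in the admissible spaces. With these two points supplied, the kernel identification via Proposition~\ref{caracterisation du noyau dans R^3}, the open mapping theorem, and your bootstrapping for the regularity statement complete the proof as outlined.
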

\noindent The proofs of Proposition~\ref{caracterisation du noyau dans R^3} and Theorem~\ref{theorem de Stokes dans R^3} can be found in~\cite{Alliot_M3AS_1999}.

\subsection{Generalized Neumann problem}\quad\\
In this section, we are interested into the following Neumann problem:
\begin{eqnarray}\label{Problem de Neumann Ax}
-\Delta\,\textbf{\textit{u}}=\textbf{\textit{f}}\quad\text{ in}\,\,\,\Omega\quad\text{ and }\quad
\dfrac{\partial\textbf{\textit{u}}}{\partial\textbf{\textit{n}}}=g\quad\text{ on}\,\,\,\Gamma.
\end{eqnarray}
Our first proposition is established also in \cite{Louati_Meslameni_Razafison}, it characterizes the kernel of the Laplace operator with Neumann boundary condition. For any integer $k\in \Z$ and $1<p<\infty$,
\begin{equation*}
\mathcal{N}_{p,k}^{\Delta}=\left\lbrace \w\in W_{k}^{1,p}(\Omega);\,\,\,\,\Delta\,\w=0\quad\mathrm{in}\,\, \Omega\quad\mathrm{and}\quad \dfrac{\partial \w}{\partial \textbf{\textit{n}}}= 0\quad\mathrm{on}\,\, \Gamma\right\rbrace.
\end{equation*}
\begin{prop}\label{noyau de neumann}
For any integer $k\geq1$, $\mathcal{N}_{p,k}^{\Delta}$ the subspace of all functions in $W_{k}^{1,p}(\Omega)$ of the form $w(p)-p$, where $p$ runs over all polynomials of $\mathcal{P}_{[1-3/p-k]}^{\Delta}$ and $w(p)$ is the unique solution in $W_{0}^{1,2}(\Omega)\cap W_{k}^{1,p}(\Omega)$ of the Neumann problem
\begin{equation}\label{problem de Neumann liee au noyeau}
\Delta\,w(p)=0\quad\mathrm{in}\,\, \Omega\quad\mathrm{and}\quad\dfrac{\partial w(p)}{\partial \textbf{\textit{n}}}=\dfrac{\partial p}{\partial \textbf{\textit{n}}}\quad\mathrm{on}\,\, \Gamma.
\end{equation}
Here also, we set $\mathcal{N}_{p,k}^{\Delta}=\left\lbrace 0\right\rbrace $ when $k\leq0$; $\mathcal{N}_{p,k}^{\Delta}$ is a finite-dimentional space of the same dimension as $\mathcal{P}_{[1-3/p-k]}^{\Delta}$ and in particular, $\mathcal{N}_{0}^{\Delta}=\R$.
\end{prop}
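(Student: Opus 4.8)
The plan is to characterize $\mathcal{N}_{p,k}^{\Delta}$ by showing that every harmonic $\w\in W_k^{1,p}(\Omega)$ with $\partial\w/\partial\n=0$ on $\Gamma$ decomposes as $w(p)-p$ for a suitable harmonic polynomial $p$, and conversely that each such $w(p)-p$ lies in $\mathcal{N}_{p,k}^{\Delta}$. First I would treat the case $k\le 0$: for $3/p+k>1$ one has no nonzero polynomials in the relevant spaces and the classical argument (integration by parts against $\w$ itself, using the Green formula and the fact that $W_k^{1,p}(\Omega)$ functions with $k\ge 1$ decay at infinity) forces $\nabla\w=0$, hence $\w=0$; for $k=0$ the only harmonic functions with vanishing Neumann data that are bounded are the constants, giving $\mathcal{N}_0^{\Delta}=\R$. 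The identification $\dim\mathcal{N}_{p,k}^{\Delta}=\dim\mathcal{P}_{[1-3/p-k]}^{\Delta}$ then follows once the map $p\mapsto w(p)-p$ is shown to be a linear isomorphism onto $\mathcal{N}_{p,k}^{\Delta}$.

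The key construction is the auxiliary problem~\eqref{problem de Neumann liee au noyeau}: given a harmonic polynomial $p\in\mathcal{P}_{[1-3/p-k]}^{\Delta}$, I would invoke the known solvability of the exterior Neumann problem for the Laplacian in weighted Sobolev spaces (the Hilbertian case $W_0^{1,2}(\Omega)$ from~\cite{Amrouche_1994}-type results, intersected with the $W_k^{1,p}$ regularity coming from the elliptic theory in weighted spaces) to produce the unique $w(p)$ with $\partial w(p)/\partial\n=\partial p/\partial\n$ on $\Gamma$ and $\Delta w(p)=0$ in $\Omega$; uniqueness here uses exactly the $k\le 0$ kernel triviality established above, noting that $W_0^{1,2}(\Omega)$ excludes the constants for the exterior domain in $\R^3$. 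Then $w(p)-p$ is harmonic in $\Omega$, satisfies the homogeneous Neumann condition on $\Gamma$ by construction, and belongs to $W_k^{1,p}(\Omega)$ because $p\in\mathcal{P}_{[1-3/p-k]}^{\Delta}\subset W_k^{1,p}(\Omega)$ and $w(p)\in W_k^{1,p}(\Omega)$; hence $w(p)-p\in\mathcal{N}_{p,k}^{\Delta}$.

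For the reverse inclusion, given $\w\in\mathcal{N}_{p,k}^{\Delta}$, I would use the asymptotic expansion of harmonic functions in weighted Sobolev spaces: near infinity $\w$ splits as a harmonic polynomial $p$ of degree at most $[1-3/p-k]$ plus a remainder lying in a better-decaying space $W_{k'}^{1,p}(\Omega)$ with $k'$ large enough that the corresponding kernel is trivial. Setting $\w + p =: w(p)$ (after matching the polynomial part correctly up to sign, so that $\w = w(p) - p$), one checks $w(p)$ is harmonic, has Neumann data $\partial p/\partial\n$ on $\Gamma$, and lies in $W_0^{1,2}(\Omega)\cap W_k^{1,p}(\Omega)$; by the uniqueness in~\eqref{problem de Neumann liee au noyeau} it is the solution associated to $p$, so $\w$ has the claimed form. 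Linearity and injectivity of $p\mapsto w(p)-p$ (injective because $w(p)-p=0$ would force $p = w(p)\in W_0^{1,2}(\Omega)$, impossible for a nonzero polynomial) give the dimension count.

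The main obstacle I expect is the careful handling of the polynomial asymptotics: extracting from an arbitrary $\w\in\mathcal{N}_{p,k}^{\Delta}$ the correct harmonic polynomial part with the sharp degree bound $[1-3/p-k]$, and verifying that the remainder genuinely falls into a weighted space whose Neumann kernel is trivial, so that the decomposition is forced and unique. This relies on the fine structure theory of weighted Sobolev spaces for the Laplacian (the isomorphism~\eqref{derive.espaces.poids}-type results and the characterization of kernels and ranges in~\cite{Amrouche_1994, Amrouche_JMPA_1997}), together with the elliptic regularity needed to upgrade the a priori $W_0^{1,2}$ solution of~\eqref{problem de Neumann liee au noyeau} to $W_k^{1,p}$; the rest is routine integration by parts via the Green formula~\eqref{FG1} and bookkeeping of exponents.
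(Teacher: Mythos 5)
You cannot really be measured against an internal argument here: the paper does not prove Proposition~\ref{noyau de neumann} at all, it simply quotes it from \cite{Louati_Meslameni_Razafison}. Your outline is, in substance, the proof strategy of that reference (and of \cite{Amrouche_JMPA_1997} for exterior Laplace problems): solve the exterior Neumann problem with data $\partial p/\partial\textbf{\textit{n}}$ in the Hilbertian weighted space to construct $w(p)$, check that $w(p)-p$ lies in the kernel, and show that $p\mapsto w(p)-p$ is a linear bijection, with uniqueness of $w(p)$ and injectivity both resting on the fact that no nonzero polynomial belongs to $W^{1,2}_{0}(\Omega)$ in dimension three; all of that is correct. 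Two steps are glossed over. First, triviality of the kernel in the decaying range cannot be obtained for $p\neq 2$ by integrating $|\nabla w|^{2}$ directly, since square integrability of the gradient is not part of the hypotheses; one must first derive pointwise decay of the function and of its gradient at infinity from harmonicity and weighted integrability (or reduce to the $L^{2}$ theory by a cut-off and regularity argument) before letting $R\to\infty$ in the integration by parts over $\Omega_{R}$. Second, the surjectivity step --- extracting from an arbitrary kernel element a harmonic polynomial part of degree $[1-3/p-k]$ with a remainder in $W^{1,2}_{0}(\Omega)$ --- is precisely the nontrivial input; in \cite{Louati_Meslameni_Razafison}, and in this paper's own analogous argument for the Stokes kernel (Proposition~\ref{caracterisation du noyau}), it is obtained not through an asymptotic expansion but by extending the kernel element to $\R^{3}$, solving the whole-space problem for the resulting compactly supported data in the Hilbertian scale, and identifying the difference as a harmonic tempered distribution, hence a polynomial in the sum of the two weighted spaces; this is equivalent to your route but requires no expansion theorem. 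Finally, note that the statement as printed is internally inconsistent (``$k\geq 1$'' versus ``$\mathcal{N}^{\Delta}_{p,k}=\{0\}$ when $k\leq 0$'' and ``$\mathcal{N}^{\Delta}_{0}=\R$''); your reading, in which the kernel is nontrivial exactly when $\mathcal{P}^{\Delta}_{[1-3/p-k]}\neq\{0\}$, is the intended one from the cited reference.
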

\noindent The next theorem states an existence, uniqueness and regularity result for problem~\eqref{Problem de Neumann Ax}, for $k=0$ and $1<p<\infty$.
\begin{theo}\label{theorem Neumann}
For any $\textbf{\textit{f}}$ in $L^{p}(\Omega)$ and $g$ in $W^{-1/p,p}(\Gamma)$. Then, the problem~\eqref{Problem de Neumann Ax} has a solution $\textbf{\textit{u}}\in W^{1,p}_{-1}(\Omega)$  unique up to element of $\mathcal{N}^{\Delta}_{-1,p}(\Omega)$ and we have the following estimate: 
\begin{eqnarray}\label{estimation faible neumann}
\Vert\textbf{\textit{u}}\Vert_{W^{1,p}_{-1}(\Omega)/ \mathcal{N}^{\Delta}_{-1,p}(\Omega)} \leqslant C\big(\Vert\textbf{\textit{f}}\Vert_{L^{p}(\Omega)}+\Vert g\Vert_{W^{-1/p,p}(\Omega)}\big).
\end{eqnarray}
If in addition, $g$ in $W^{1/p',p}(\Gamma)$, the solution $\textbf{\textit{u}}$ of problem~\eqref{Problem de Neumann Ax} belongs to $W^{2,p}_{0}(\Omega)$ and satisfies 
\begin{eqnarray}\label{estimation forte neumann}
\Vert\textbf{\textit{u}}\Vert_{W^{2,p}_{0}(\Omega)} \leqslant C\big(\Vert\textbf{\textit{f}}\Vert_{L^{p}(\Omega)}+\Vert g\Vert_{W^{1/p',p}(\Omega)}\big).
\end{eqnarray}
\end{theo}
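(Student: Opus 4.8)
The plan is to solve the Neumann problem \eqref{Problem de Neumann Ax} in two stages: first obtain a weak solution in $W^{1,p}_{-1}(\Omega)$, then bootstrap to the strong estimate in $W^{2,p}_{0}(\Omega)$. For the weak stage, I would start from the Hilbertian case $p=2$, where the problem is variational: one works in the space $W^{1,2}_{-1}(\Omega)/\mathcal{N}^{\Delta}_{-1,2}(\Omega)$, uses the weighted Poincaré–type inequality to see that the seminorm $|\cdot|_{W^{1,2}_{-1}(\Omega)}=\|\nabla\cdot\|_{L^2(\Omega)}$ is a norm on the quotient, and applies Lax–Milgram to the bilinear form $\int_\Omega \nabla u\cdot\nabla v$ against the linear functional $v\mapsto \langle \textbf{\textit f},v\rangle + \langle g,v\rangle_\Gamma$. (The linear functional is continuous on $W^{1,2}_{-1}(\Omega)$ because $\textbf{\textit f}\in L^2(\Omega)\subset W^{0,2}_1(\Omega)'= (W^{1,2}_{-1}(\Omega))'$ via the weighted embedding, and $g\in W^{-1/p,p}(\Gamma)$ pairs with the trace.) This yields a unique solution modulo $\mathcal{N}^{\Delta}_{-1,2}$.

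For general $1<p<\infty$ the idea is the standard one for exterior-domain elliptic problems: combine an interior/near-boundary parametrix with the known $L^p$ theory for the Neumann Laplacian in $\R^3$ (resp. the half-space) to localize. Concretely, fix a large ball $B_R\supset\overline{\Omega'}$, take a cutoff $\varphi$ equal to $1$ near $\Gamma$ and supported in $B_{R}$; then $\varphi\textbf{\textit u}$ solves a Neumann problem in the bounded domain $\Omega_R$ with right-hand side in $L^p$ and boundary data in $W^{-1/p,p}(\Gamma)$ on $\Gamma$ and $W^{1-1/p,p}$ on $\partial B_R$, to which the classical $L^p$ Neumann regularity in bounded $C^{1,1}$ domains applies; and $(1-\varphi)\textbf{\textit u}$, after extension by zero, solves $-\Delta w = \tilde{\textbf{\textit f}}$ in $\R^3$ with $\tilde{\textbf{\textit f}}$ collecting the original datum away from the obstacle plus commutator terms $-2\nabla\varphi\cdot\nabla\textbf{\textit u}-(\Delta\varphi)\textbf{\textit u}$ supported in the annulus. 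The whole-space piece is handled by the weighted $L^p$ theory for $-\Delta$ in $\R^3$ (the isomorphism $W^{2,p}_{0}(\R^3)/\mathcal{P}_{[1-3/p]}\to W^{0,p}_{0}(\R^3)\perp\mathcal{P}_{[1-3/p]}$, analogous to Theorem~\ref{theorem de Stokes dans R^3}). Patching the two pieces gives $\textbf{\textit u}\in W^{1,p}_{-1}(\Omega)$ with the estimate \eqref{estimation faible neumann}, the kernel $\mathcal{N}^{\Delta}_{-1,p}$ being exactly the obstruction to uniqueness, characterized by Proposition~\ref{noyau de neumann}; here the hypothesis $k=0$ is what makes $W^{1,p}_{-1}$ the right energy space and keeps the polynomial kernels small.

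For the strong stage, assuming in addition $g\in W^{1/p',p}(\Gamma)=W^{1-1/p,p}(\Gamma)$, I would run the same localization but now invoke the $W^{2,p}$ Neumann estimate in the bounded piece (valid since $\Gamma$ is $C^{2,1}$) and the $W^{2,p}_0(\R^3)$ regularity for $-\Delta$ in the whole-space piece; note the commutator terms $-2\nabla\varphi\cdot\nabla\textbf{\textit u}-(\Delta\varphi)\textbf{\textit u}$ now belong to $L^p$ with compact support because we already know $\textbf{\textit u}\in W^{1,p}_{-1}(\Omega)\subset W^{1,p}_{loc}$, so no loss occurs. Reassembling yields $\textbf{\textit u}\in W^{2,p}_{0}(\Omega)$ and the estimate \eqref{estimation forte neumann}. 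The main obstacle is the weak existence step for $p\neq 2$: one must prove the a priori estimate \eqref{estimation faible neumann} on the quotient space, which requires controlling the low-order term $\rho^{-1}\textbf{\textit u}$ by $\|\nabla\textbf{\textit u}\|_{L^p}$ plus compact perturbations and then absorbing the finite-dimensional kernel — a Peetre–Tartar / compactness argument on $\Omega_R$ combined with the whole-space weighted Hardy inequality — and to verify that no compatibility condition on $(\textbf{\textit f},g)$ is needed because for $k=0$ the relevant polynomial kernel reduces to constants and the datum $\textbf{\textit f}\in L^p(\Omega)$ automatically pairs to zero against $\mathcal{P}_0$ in the appropriate duality (or rather, the range condition is vacuous in this weight range). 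Once the weak estimate and the Fredholm alternative are in place, the regularity bootstrap is routine.
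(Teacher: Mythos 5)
Your second (regularity) stage is essentially the paper's argument: a cut-off $\varphi+\psi=1$, whole-space $W^{2,p}_{0}(\R^3)$ regularity for $-\Delta$ applied to the far piece (the commutator terms being compactly supported and in $L^p$), and a bounded-domain second-order estimate for the near piece with Neumann data $g$ on $\Gamma$ and Dirichlet data on the outer sphere. That part of your plan is sound.

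The genuine gap is in your weak-existence stage. Lax--Milgram on $W^{1,2}_{-1}(\Omega)/\mathcal{N}^{\Delta}_{-1,2}$ with the form $\int_\Omega\nabla u\cdot\nabla v$ does not work: by the definition used in this paper, $u\in W^{1,2}_{-1}(\Omega)$ means $\rho^{-2}u\in L^{2}$ and $\rho^{-1}\nabla u\in L^{2}$, so the seminorm of that space is $\|\rho^{-1}\nabla u\|_{L^{2}}$, not $\|\nabla u\|_{L^{2}}$; the space contains functions growing like $r^{\beta}$, $\beta<1/2$, whose Dirichlet energy is infinite, so the bilinear form is neither finite nor coercive there. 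Likewise the identification ``$L^{2}(\Omega)\subset W^{0,2}_{1}(\Omega)'=(W^{1,2}_{-1}(\Omega))'$'' is false: $(W^{0,2}_{1})'=W^{0,2}_{-1}$, which is not the dual of $W^{1,2}_{-1}$, and for $\textbf{\textit f}\in L^{2}$ only, $v\mapsto\int_\Omega \textbf{\textit f}\cdot v$ is not continuous on $W^{1,2}_{-1}(\Omega)$ (one would need $\rho^{2}\textbf{\textit f}\in L^{2}$, or $v\in L^2$, neither of which is available). So the foundation of your first stage fails as written; the natural variational space would be $W^{1,2}_{0}(\Omega)$, and reaching the larger class $W^{1,p}_{-1}(\Omega)$ with data merely in $L^{p}$ requires a different mechanism. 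The paper sidesteps all of this by splitting the data rather than localizing the operator: extend $\textbf{\textit f}$ by zero, solve $-\Delta\widetilde{\textbf{\textit v}}=\widetilde{\textbf{\textit f}}$ in $\R^3$ by the weighted theory of Amrouche--Girault--Giroire (giving $\widetilde{\textbf{\textit v}}\in W^{2,p}_{0}(\R^3)$, hence $\nabla\widetilde{\textbf{\textit v}}\cdot\textbf{\textit n}\in W^{-1/p,p}(\Gamma)$), and then solve the remaining exterior Neumann problem with zero right-hand side and datum $g-\nabla\widetilde{\textbf{\textit v}}\cdot\textbf{\textit n}$ by directly invoking the known result of Louati--Meslameni--Razafison (their Theorem 3.12), which is exactly where the $W^{1,p}_{-1}$ solvability modulo $\mathcal{N}^{\Delta}_{-1,p}$ and the absence of compatibility conditions come from. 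Your alternative of proving that $L^p$-theory from scratch (a priori estimate, Peetre--Tartar, Fredholm alternative) is a much heavier program that you only sketch: you would have to identify the kernel and cokernel in the weighted scale and justify the absence of compatibility conditions, i.e.\ essentially re-derive the cited theorem, and the key estimate is not established in your proposal.
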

\begin{proof}
Let us extend $\textbf{\textit{f}}$ by zero in $\Omega'$ and let $\widetilde{\textbf{\textit{f}}}$ denote the extended function. Then $\widetilde{\textbf{\textit{f}}}$ belongs to $L^{p}(\R^3)$. Applying \cite{Amrouche_1994}, there exists a unique function $\widetilde{\textbf{\textit{v}}} \in W^{2,p}_{0}(\R^3)/\mathcal{P}^{\Delta}_{[2-3/p]}$ such that
\begin{eqnarray}\label{laplace in R^3}
-\Delta\,\widetilde{\textbf{\textit{v}}}=\widetilde{\textbf{\textit{f}}}\quad\text{ in }\,\,\R^3.
\end{eqnarray}
Then $\nabla\,\widetilde{\textbf{\textit{v}}}\cdot\textbf{\textit{n}}$ belongs to $W^{1-1/p,p}(\Gamma)\hookrightarrow W^{-1/p,p}(\Gamma)$. It follows from \cite[Theorem 3.12]{Louati_Meslameni_Razafison}, that the following problem:
\begin{eqnarray}\label{Neumann Harmonic}
\Delta\,\textbf{\textit{w}}=0\quad\text{ in }\,\,\,\Omega\quad\text{ and } \quad\nabla\,\textbf{\textit{w}}\cdot\textbf{\textit{n}}=g-\nabla\,\widetilde{\textbf{\textit{v}}}\cdot\textbf{\textit{n}}\quad\text{ on }\,\,\,\Gamma,
\end{eqnarray}
has a solution $w\in W^{1,p}_{-1}(\Omega)$ unique up to element of $\mathcal{N}^{\Delta}_{-1,p}(\Omega)$. Thus $\textbf{\textit{u}}=\widetilde{\textbf{\textit{v}}}_{\mid_{\Omega}}
+\textbf{\textit{w}}\in W^{1,p}_{-1}(\Omega)$ is the required solution of ~\eqref{Problem de Neumann Ax}. The uniqueness follows immediately from Proposition~\ref{noyau de neumann} .\\

\noindent Now, suppose that $g$ belongs to $W^{1/p',p}(\Gamma).$ The aim is to  prove that $\textbf{\textit{u}}$ belongs to $W^{2,p}_{0}(\Omega)$. To that end, let us introduce the following partition of unity:
\begin{equation}
 \label{partition de l'unite}
 \begin{split}
&\varphi,\,\psi\in\mathcal{C}^\infty(\R^3),\quad 0\le\varphi,\,\psi\le1,\quad \varphi+\psi=1\quad\text{in}\quad\R^3,\\ 
&\varphi=1\quad\text{in}\quad B_R,\quad\text{supp }\varphi\subset B_{R+1}.
\end{split}
\end{equation}
Let $P$ be a continuous linear mapping from $W^{1,p}_{-1}(\Omega)$ to $W^{1,p}_{-1}(\R^3)$, such that $P\,\textbf{\textit{u}}=\widetilde{\textbf{\textit{u}}}$. Then $\widetilde{\textbf{\textit{u}}}$ belongs to $W^{1,p}_{-1}(\R^3)$ and can be written as:
\begin{eqnarray*}
\widetilde{\textbf{\textit{u}}}=\varphi\,{\widetilde{\textbf{\textit{u}}}}+\psi\,\widetilde{\textbf{\textit{u}}}.
\end{eqnarray*}
Next, one can easily observe that $\widetilde{\textbf{\textit{u}}}$ satisfies the following problem:
\begin{eqnarray}\label{laplace R^3}
-\Delta\,\psi\,\widetilde{\textbf{\textit{u}}}=\textbf{\textit{f}}_{1}\quad\text{ in }\,\,\,\R^3,
\end{eqnarray}
with
\begin{eqnarray*}
\textbf{\textit{f}}_{1}=\widetilde{\textbf{\textit{f}}}{\psi}
-(2\nabla{\widetilde{\textbf{\textit{u}}}}\nabla{\psi}+\widetilde{\textbf{\textit{u}}}\Delta\,{\psi}).
\end{eqnarray*}
Owing to the support of $\psi$, $\textbf{\textit{f}}_{1}$ has the same regularity as $\textbf{\textit{f}}$ and so belongs to $L^{p}(\R^3)$. It follows from \cite{Amrouche_1994}, that there exists $\z$ in $W^{2,p}_{0}(\R^3)$ such that $-\Delta\,\z=\textbf{\textit{f}}_{1}$ in $\R^3.$ This implies that $\psi\,\widetilde{\textbf{\textit{u}}}-\z$ is a harmonic tempered distribution and therefore a harmonic polynomial that belongs to $\mathcal{P}^{\Delta}_{[2-3/p]}$. The fact that $\mathcal{P}^{\Delta}_{[2-3/p]} \subset  W^{2,p}_{0}(\R^3)$ yields that ${\psi\,\widetilde{\textbf{\textit{u}}}}$ belongs to $W^{2,p}_{0}(\R^3)$. In particular, we have ${\psi\,\widetilde{\textbf{\textit{u}}}}=\textbf{\textit{u}}$ outside $B_{R+1}$, so the restriction of $\textbf{\textit{u}}$ to $\partial\, B_{R+1}$ belongs to $W^{2-1/p,p}(\partial\,B_{R+1})$. Therefore, $\varphi\,\textbf{\textit{u}}\in W^{1,p}(\Omega_{R+1})$ satisfies the following problem:
\begin{eqnarray}\label{mixte laplace}
\begin{cases}
-\Delta\,\varphi\,\textbf{\textit{u}}=\textbf{\textit{f}}_{2}\quad\text{ in }\,\,\, \Omega_{R+1},\\
\nabla\,\varphi\,\textbf{\textit{u}}\cdot\textbf{\textit{n}}=g\quad\text{ on }\,\,\,\Gamma,\\
\varphi\,\textbf{\textit{u}}={\psi\,\widetilde{\textbf{\textit{u}}}}\quad\text{ on }\,\,\,\partial\, B_{R+1},
\end{cases}
\end{eqnarray}
where $\textbf{\textit{f}}_{2}\in L^{p}(\Omega_{R+1})$ have similar expression as  $\textbf{\textit{f}}_{1}$ with $\psi$ remplaced by $\varphi$. According [Remark 3.2, \cite{Amrouche_JMPA_1997}], $\varphi\,\textbf{\textit{u}}\in W^{2,p}(\Omega_{R+1})$, which in turn shows that $\varphi\,\widetilde{\textbf{\textit{u}}}$ also belongs to $W^{2,p}(\Omega_{R+1})$. This implies that $\textbf{\textit{u}}\in W^{2,p}_{0}(\Omega).$\\
\end{proof}


\subsection{A mixed Stokes problem}\quad\\
Let $R>0$ be a real number large enough so that $\overline{\Omega'}\subset B_R$. We recall that $\Omega_R=\Omega \cap B_R$ and $\partial\,B_{R}=\Gamma_{R}$.  Now, we study the following mixed boundary value problem: Given $\textbf{\textit{f}}$, $\chi$, $g$, $\textbf{\textit{h}}$ and $\textbf{\textit{a}}$. We look for $(\textbf{\textit{u}},\pi)$ satisfying

\begin{eqnarray}\label{Problem oxiliere }
\begin{cases}
-\Delta\,\textbf{\textit{u}}+\nabla\,{\pi}=\textbf{\textit{f}}\quad and\quad
\mathrm{div}\,\textbf{\textit{u}}={\chi}\qquad\quad\text{ in }\,\Omega_{R},\\
\quad\textbf{\textit{u}}\cdot \textbf{\textit{n}}=g\,\,\,\,and\quad
2[\mathrm{\textbf{D}}(\textbf{\textit{u}})\textbf{\textit{n}}]_{\tau}+\alpha{\textbf{\textit{u}}}_{\tau}=\textbf{\textit{h}}\quad\text{ on }\,\Gamma,\\
\quad\textbf{\textit{u}}=\textbf{\textit{a}}\,\,\, \text{on}\,\,\Gamma_{R}.
\end{cases}
\end{eqnarray}
\begin{theo}\label{problem mixte} Assume that $1< p < \infty$. Let the pair $(\textbf{\textit{u}},\pi)\in H^{2}(\Omega_R)\times H^{1}(\Omega_R)$ be a solution to the problem~\eqref{Problem oxiliere } with data $\textbf{\textit{f}}\in L^p(\Omega_R)$, $\chi\in W^{1,p}(\Omega_R)$, $g\in W^{1+1/p',p}(\Gamma)$, $\textbf{\textit{h}}\in W^{1-1/p,p}(\Gamma)$, $\textbf{\textit{a}}\in W^{1+1/p',p}(\partial B_R)$ such that $\textbf{\textit{h}}\cdot\textbf{\textit{n}}=0$ on $\Gamma$. Then, we also have $\textbf{\textit{u}}\in W^{2,p}(\Omega_R)$ and $\pi\in W^{1,p}(\Omega_R)$.
\end{theo}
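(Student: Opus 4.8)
This is a local (interior plus boundary) elliptic regularity statement for the Stokes system on the \emph{bounded} domain $\Omega_R$, whose boundary $\partial\Omega_R$ splits into two disjoint pieces: the inner boundary $\Gamma$ carrying the Navier condition $\u\cdot\n=g$, $2[\mathbf{D}(\u)\n]_\tau+\alpha\u_\tau=\h$, and the outer sphere $\Gamma_R$ carrying the Dirichlet condition $\u=\a$. Since $\Gamma$ and $\Gamma_R$ have a positive distance between them, the regularity can be localized near each piece independently, and away from the boundary one uses interior regularity. So the plan is a standard bootstrap/partition-of-unity argument, starting from the given $H^2\times H^1$ solution and upgrading to $W^{2,p}\times W^{1,p}$.

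\textbf{Key steps.} First I would reduce to the case $p>2$: for $1<p\le 2$ the embedding $H^2(\Omega_R)\hookrightarrow W^{2,p}(\Omega_R)$ and $H^1(\Omega_R)\hookrightarrow W^{1,p}(\Omega_R)$ on the bounded set $\Omega_R$ gives the conclusion for free, so assume $p>2$. Next, introduce a partition of unity $\{\varphi_0,\varphi_1,\varphi_2\}$ subordinate to a covering of $\overline{\Omega_R}$ in which $\mathrm{supp}\,\varphi_1$ meets the boundary only in $\Gamma$, $\mathrm{supp}\,\varphi_2$ meets it only in $\Gamma_R$, and $\mathrm{supp}\,\varphi_0$ is interior. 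For each $i$, $(\varphi_i\u,\varphi_i\pi)$ solves a Stokes system whose right-hand side picks up commutator terms of the form $-2\nabla\varphi_i\cdot\nabla\u-(\Delta\varphi_i)\u+(\nabla\varphi_i)\pi$ for the momentum equation and $\nabla\varphi_i\cdot\u$ for the divergence equation; crucially these involve only \emph{one} derivative of $\u$ and zero of $\pi$, hence by Sobolev embedding on $\Omega_R$ (using $\u\in H^2$, $\pi\in H^1$) they lie in $L^p$ and $W^{1,p}$ respectively — this is where the hypothesis that we start from an $H^2\times H^1$ pair, rather than merely $H^1\times L^2$, is used in an essential way. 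Then I would apply three known $L^p$ regularity results: (a) interior Stokes regularity for $\varphi_0\u$ (or equivalently the whole-space result Theorem~\ref{theorem de Stokes dans R^3} after extension, since $\varphi_0\u$ has compact support); (b) $L^p$ regularity up to the boundary for the Stokes problem with Navier boundary conditions in a bounded domain near $\Gamma$ for $\varphi_1\u$ — this is exactly the bounded-domain theory cited in the introduction (\cite{Ahmed_2014, Beirao_ADE_2004}); (c) $L^p$ regularity up to the boundary for the Stokes–Dirichlet problem near $\Gamma_R$ for $\varphi_2\u$, which is classical (e.g. Cattabriga / Galdi, or the bounded-domain references \cite{Girault_M2AS_1992} and the remark [Remark 3.2, \cite{Amrouche_JMPA_1997}] already invoked in the proof of Theorem~\ref{theorem Neumann}). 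Summing $\varphi_0\u+\varphi_1\u+\varphi_2\u=\u$ yields $\u\in W^{2,p}(\Omega_R)$ and $\pi\in W^{1,p}(\Omega_R)$.

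\textbf{The main obstacle.} The only delicate point — and the step I expect to need the most care — is checking that near the inner boundary $\Gamma$ the \emph{Navier} condition transforms correctly under multiplication by the cutoff and that the relevant bounded-domain $L^p$-estimate is genuinely available in the form needed. Multiplying $2[\mathbf{D}(\u)\n]_\tau+\alpha\u_\tau=\h$ by $\varphi_1$ produces an extra term $2[(\nabla\varphi_1\otimes\u)_{\mathrm{sym}}\,\n]_\tau$ which, because $\varphi_1$ is constant near $\Gamma$, actually vanishes on $\Gamma$ itself; so the localized boundary data for $\varphi_1\u$ is $\varphi_1\h\in W^{1-1/p,p}(\Gamma)$ and $\varphi_1 g\in W^{1+1/p',p}(\Gamma)$ with the required compatibility $\varphi_1\h\cdot\n=0$, exactly matching the hypotheses. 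One must also make sure the localized divergence datum $\varphi_1\chi+\nabla\varphi_1\cdot\u$ stays in $W^{1,p}$, which again follows from $\chi\in W^{1,p}$ and $\u\in H^2\hookrightarrow W^{1,p}$ on $\Omega_R$. Modulo citing the appropriate bounded-domain Navier–Stokes regularity theorem (available in the literature referenced above), the argument is then a routine assembly, so I would keep the exposition brief and emphasize only the commutator bookkeeping and the verification of the compatibility condition on $\Gamma$.
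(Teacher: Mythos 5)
Your overall strategy is the same as the paper's: reduce to $p>2$ by the trivial embedding on the bounded set $\Omega_R$, localize with cutoffs (the paper uses two, $\theta_1$ near $\Gamma$ and $\theta_2$ near $\partial B_R$, rather than your three, which is immaterial), treat the piece near $\Gamma$ with the bounded-domain $L^p$ theory for the Stokes--Navier problem and the piece near $\partial B_R$ with the classical Stokes--Dirichlet theory, and check that the cutoff does not perturb the Navier data on $\Gamma$ because the cutoff is constant there.

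There is, however, a genuine gap in your key step: the claim that the commutator terms lie in $L^p(\Omega_R)\times W^{1,p}(\Omega_R)$ \emph{for every} $p>2$ by Sobolev embedding from $\textbf{\textit{u}}\in H^{2}(\Omega_R)$, $\pi\in H^{1}(\Omega_R)$. In three dimensions $H^{1}(\Omega_R)$ embeds only into $L^{6}(\Omega_R)$, so the terms $\nabla\varphi_i\cdot\nabla\textbf{\textit{u}}$ and $(\nabla\varphi_i)\,\pi$ in the localized momentum equation, and the term $\nabla\varphi_i\cdot\nabla\textbf{\textit{u}}$ appearing when you differentiate the localized divergence datum, belong a priori only to $L^{6}$, not to $L^{p}$ when $p>6$. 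Hence your argument, as written, proves the theorem only for $2<p\leqslant 6$ (which is exactly the range in which the paper invokes the bounded-domain result of Acevedo--Amrouche--Conca--Ghosh). To cover $p>6$ you need the additional bootstrap that the paper carries out: first conclude $(\textbf{\textit{u}},\pi)\in W^{2,6}(\Omega_R)\times W^{1,6}(\Omega_R)$, then observe that $W^{1,6}(\Omega_R)$ embeds into $L^{q}(\Omega_R)$ for every finite $q$ (indeed $\nabla\textbf{\textit{u}}$ and $\pi$ are then bounded), so that rerunning the same localization now produces data in $L^{p}\times W^{1,p}$ for the given $p>6$, together with boundary traces of the required regularity. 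With that one extra iteration your proof matches the paper's; without it, the case $p>6$ is not established.
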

%
\begin{proof}
The proof of the theorem is made of tow steps.
\begin{itemize}
\item[Step 1.] The case $ 1< p\leqslant 2$.\\
Since $(\textbf{\textit{u}},\pi)\in H^{2}(\Omega_R)\times H^{1}(\Omega_R)$ then its is clear that $(\textbf{\textit{u}},\pi)\in W^{2,p}(\Omega_R)\times W^{1,p}(\Omega_R)$ for any $ 1< p\leqslant 2$.

\item[Step 2.] The case $2< p<\infty$.\\ 	Let us introduce the following partition of unity:
\begin{equation*}
 \begin{split}
&\theta_{1},\,\theta_{2}\in\mathcal{C}^\infty(\overline{\Omega}_R),\quad 0\le\theta_{1},\,\theta_{2}\le1,\quad \theta_{1}+\theta_{2}=1\quad\text{in}\quad\Omega_R,\\ 
&\theta_{1}=1\quad\text{in}\quad B_{R/3},\quad\text{supp }\theta_{1}\subset B_{2R/3}.
\end{split}
\end{equation*}
Then we can write $\textbf{\textit{u}}=\theta_{1}\textbf{\textit{u}}+\theta_{2}\textbf{\textit{u}}=\textbf{\textit{u}}_{1}+\textbf{\textit{u}}_{2}$
and $\pi=\theta_{1}\pi+\theta_{2}\pi=\pi_1+\pi_2$. Now it is clear that
\begin{eqnarray}\label{f_1 et x_1 dans omega_R}
-\Delta\,\textbf{\textit{u}}_{1}+\nabla\,\pi_{1}=\textbf{\textit{f}}_{1}\quad\text{ and }\quad \mathrm{div}\,\textbf{\textit{u}}_{1}=\chi_{1}\quad\text{ in }\,\,\Omega_R,
\end{eqnarray}
where $\textbf{\textit{f}}_{1}=\theta_{1}\,\textbf{\textit{f}}-\textbf{\textit{u}}\,\Delta\,\theta_{1}-2\nabla\,\textbf{\textit{u}}\cdot\nabla\,\theta_{1}+\pi\,\nabla\,\theta_{1}\in L^{p}(\Omega_R)$ and $\chi_{1}=\theta\,\chi+\textbf{\textit{u}}\,\nabla\,\theta_{1}\in W^{1,p}(\Omega_R)$ for $2\leqslant p\leqslant 6$. It is also clear that on the boundaries, we have
\begin{eqnarray}\label{h sur Gamma}
\begin{cases}
\textbf{\textit{u}}_{1}\cdot\textbf{\textit{n}}=g\quad,\quad 2[\mathrm{\textbf{D}}(\textbf{\textit{u}}_{1})\textbf{\textit{n}}]_{\tau}+\alpha{\textbf{\textit{u}}_{1}}_{\tau}=\textbf{\textit{h}}\quad\text{ on }\,\,\,\Gamma,\\
\textbf{\textit{u}}_{1}\cdot\textbf{\textit{n}}=0\quad,\quad 2[\mathrm{\textbf{D}}(\textbf{\textit{u}}_{1})\textbf{\textit{n}}]_{\tau}+\alpha{\textbf{\textit{u}}_{1}}_{\tau}=\textbf{\textit{0}}\quad\text{ on }\,\,\,\partial B_R,
\end{cases}
\end{eqnarray}
where $\textbf{\textit{h}}\in W^{1-1/p,p}(\Gamma)$ and $g\in W^{1+1/p',p}(\Gamma)$, we deduce that $(\textbf{\textit{u}}_{1},\pi_{1})$ that satisfies \eqref{f_1 et x_1 dans omega_R}-\eqref{h sur Gamma} belongs to $W^{2,p}(\Omega_R)\times W^{1,p}(\Omega_R)$ for $2< p\leqslant 6$ see \cite[Theorem 2.1]{Amrita_2018}.\\
Similar arguments show $(\textbf{\textit{u}}_{2},\pi_{2})$ satisfies the following Stokes problem with the Dirichlet boundary 
\begin{eqnarray}\label{maria}
\begin{cases}
-\Delta\,\textbf{\textit{u}}_{2}+\nabla\,\pi_{2}=\textbf{\textit{f}}_{2}\quad\text{ and }\quad \mathrm{div}\,\textbf{\textit{u}}_{2}=\chi_{2}\quad\text{ in }\,\,\Omega_R,\\
\textbf{\textit{u}}_{2}=0\quad\text{ on }\,\,\Gamma\quad\text{ and }\quad \textbf{\textit{u}}_{2}=\textbf{\textit{a}}\quad\text{ on }\,\,\partial B_R. 
\end{cases}
\end{eqnarray}
Where $\textbf{\textit{f}}_{2}\in L^{p}(\Omega_R)$ and $\chi_{2}\in W^{1,p}(\Omega_R)$ have similar expression as $\textbf{\textit{f}}_{1}$ and $\chi_{1}$ with $\theta_1$ replaced by $\theta_2$. Since $\textbf{\textit{a}}\in W^{1+1/p',p}(\partial B_R)$, the problem~\eqref{maria} has a solution  $(\textbf{\textit{u}}_{2},\pi_2)$ belongs to $W^{2,p}(\Omega_R)\times W^{1,p}(\Omega_R)$ (see for instance~\cite{Amrouche_CMJ_1994} or~\cite{Galdi_book_1}).\\
Now, suppose that $p>6$. The above argument shows that $(\textbf{\textit{u}},\pi)$ belongs to $W^{2,6}(\Omega_R)\times W^{1,6}(\Omega_R)$ and we can repeat the same argument with $p=6$ instead $p=2$. We have $W^{1,6}(\mathcal{O})\hookrightarrow L^{q}(\mathcal{O})$ for any real number $q>6$. We know that the embedding
\begin{center}
$W^{2,p}(\Omega')\hookrightarrow W^{1,q}(\Omega')$,
\end{center}
for any $q\in [1,\infty]$ if $p>3$. Then we have $\textbf{\textit{u}}_{\mid_{\Gamma}}\in W^{2-1/6,6}(\Gamma)\hookrightarrow W^{1-1/p}(\Gamma)$
 for all $p>6$. Consequently, by the some reasoning we deduce that the solution  $(\textbf{\textit{u}},\pi)$ belongs to $W^{2,p}(\Omega_R)\times W^{1,p}(\Omega_R)$.
 \end{itemize}
\end{proof}


\section{Strong solutions for the exterior Stokes problem}\label{sect.string.solution}
In this section, we are interested in the following problem:
\begin{eqnarray*}
(\mathcal{S}_{T})
\begin{cases}
-\Delta\,\textbf{\textit{u}}+\nabla\,{\pi}=\textbf{\textit{f}}\quad and\quad
\mathrm{div}\,\textbf{\textit{u}}={\chi}\qquad\quad\text{ in }\,\Omega,\\
\quad\textbf{\textit{u}}\cdot \textbf{\textit{n}}=g\,\,\,\,and\quad
2[\mathrm{\textbf{D}}(\textbf{\textit{u}})\textbf{\textit{n}}]_{\tau}+\alpha\textbf{\textit{u}}_{\tau}=\textbf{\textit{h}}\quad\text{ on }\,\Gamma.

\end{cases}
\end{eqnarray*}

In this part, we investigate the well-posedness of strong solutions in $W_{k+1}^{2,p}(\Omega)\times W_{k+1}^{1,p}(\Omega)$ with $k\in \Z$. In order to deal with the uniqueness issues, we first need to characterize the kernel of problem $(\mathcal{S}_{T})$. We define the kernel of problem $(\mathcal{S}_T)$.
\noindent For $k\in\Z$ and $p\geqslant 2$, we introduce:
\begin{eqnarray*}
&&\mathcal{N}_{k+1}^{p}(\Omega)=\Big\lbrace  (\textbf{\textit{u}},{\pi})\in W^{2,p}_{k+1}(\Omega)\times{{W}}^{1,p}_{k+1}(\Omega);\,\\
&& -\Delta\,\textbf{\textit{u}}+\nabla\,{\pi}=0,\, \mathrm{div}\,\textbf{\textit{u}}=0\text{ in }\,\Omega\text{ and }\textbf{\textit{u}}\cdot \textbf{\textit{n}}=0,\,\,2[\mathrm{\textbf{D}}(\textbf{\textit{u}})\textbf{\textit{n}}]_{\tau}+\alpha\textbf{\textit{u}}_{\tau}=0\,\text{ on }\,\Gamma\Big\rbrace.
\end{eqnarray*}

\noindent The characterization of the kernel $\mathcal{N}_{k+1}^{p}(\Omega)$ is given by the following proposition:
\begin{prop}\label{caracterisation du noyau}
Consider an exterior domain $\Omega$ with boundary $\Gamma$ of class $\mathcal{C}^{2.1}$. We have\\
 $$\mathcal{N}_{k+1}^{p}(\Omega)=\left\lbrace  (\textbf{\textit{v}}-\boldsymbol{\lambda},{\theta}-\mu); \quad (\boldsymbol{\lambda},\mu)\in N_{[1-k-3/p]}\right\rbrace, $$ 
where $(\textbf{\textit{v}},{\theta})\in W^{2,p}_{k+1}(\Omega)\cap W^{2,2}_{1}(\Omega)\times{{W}}^{1,p}_{k+1}(\Omega)\cap W^{1,2}_{1}(\Omega)$ is the unique solution of the following problem:
\begin{equation}
\label{problem stokes du noyau}
\begin{cases}
-\Delta\,\textbf{\textit{v}}+\nabla\,{\theta}=\boldsymbol{0}\quad\text{and}\quad
\mathrm{div}\,\textbf{\textit{v}}=0\quad\text{in}\quad\Omega,\\[4pt]
\textbf{\textit{v}}\cdot \textbf{\textit{n}}=\boldsymbol{\lambda}\cdot\textbf{\textit{n}}\quad\text{and}\quad
2[\mathrm{\textbf{D}}(\textbf{\textit{v}})\textbf{\textit{n}}]_{\tau}+\alpha\textbf{\textit{v}}_{\tau}=
2[\mathrm{\textbf{D}}(\boldsymbol{\lambda})\textbf{\textit{n}}]_{\tau}+\alpha\boldsymbol{\lambda}_{\tau}\quad\text{on}\quad\Gamma.
\end{cases}
\end{equation}
In particular, $\mathcal{N}_{k+1}^{p}(\Omega)=\lbrace(\textbf{\textit{0}},0)\rbrace$ if $k>1-3/p$.\\ 
\end{prop}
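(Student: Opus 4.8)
The plan is to establish the asserted identity by proving the two inclusions separately, the inclusion $\supseteq$ being a direct verification and the inclusion $\subseteq$ requiring a cut-off argument that transfers the behaviour at infinity to the whole-space Stokes system.

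For $\supseteq$, I would fix $(\boldsymbol{\lambda},\mu)\in N_{[1-k-3/p]}$ and first record that \eqref{problem stokes du noyau} is well posed in the stated space: its boundary data $\boldsymbol{\lambda}\cdot\mathbf n$ and $2[\mathbf D(\boldsymbol{\lambda})\mathbf n]_\tau+\alpha\boldsymbol{\lambda}_\tau$ are restrictions of polynomials, hence smooth, and the latter is tangential, so the Hilbertian theory of the exterior Stokes problem with Navier condition from \cite{DMR-2019} yields a unique $(\mathbf v,\theta)\in W^{2,2}_1(\Omega)\times W^{1,2}_1(\Omega)$, which a weighted $L^p$-regularity argument — cutting off near $\Gamma$ and near infinity, and invoking Theorem~\ref{problem mixte} for the bounded part and Theorem~\ref{theorem de Stokes dans R^3} for the part at infinity — upgrades to $W^{2,p}_{k+1}(\Omega)\times W^{1,p}_{k+1}(\Omega)$. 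Since $\mathcal P_{[1-k-3/p]}$ is exactly the set of polynomials contained in $W^{2,p}_{k+1}(\Omega)$ (and the corresponding statement holds for the pressure), $\boldsymbol{\lambda}\in W^{2,p}_{k+1}(\Omega)$ and $\mu\in W^{1,p}_{k+1}(\Omega)$, so $(\mathbf v-\boldsymbol{\lambda},\theta-\mu)\in W^{2,p}_{k+1}(\Omega)\times W^{1,p}_{k+1}(\Omega)$. Because $-\Delta\boldsymbol{\lambda}+\nabla\mu=\mathbf 0$ and $\mathrm{div}\,\boldsymbol{\lambda}=0$, subtracting $(\boldsymbol{\lambda},\mu)$ from \eqref{problem stokes du noyau} gives $-\Delta(\mathbf v-\boldsymbol{\lambda})+\nabla(\theta-\mu)=\mathbf 0$, $\mathrm{div}(\mathbf v-\boldsymbol{\lambda})=0$ in $\Omega$ and $(\mathbf v-\boldsymbol{\lambda})\cdot\mathbf n=0$, $2[\mathbf D(\mathbf v-\boldsymbol{\lambda})\mathbf n]_\tau+\alpha(\mathbf v-\boldsymbol{\lambda})_\tau=\mathbf 0$ on $\Gamma$, i.e. $(\mathbf v-\boldsymbol{\lambda},\theta-\mu)\in\mathcal N^p_{k+1}(\Omega)$.

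For $\subseteq$, I would take $(\mathbf u,\pi)\in\mathcal N^p_{k+1}(\Omega)$; by \eqref{inclusion.sobolev2} also $(\mathbf u,\pi)\in W^{1,p}_k(\Omega)\times W^{0,p}_k(\Omega)$, and one may assume $k\le 1-3/p$, hence $k\le 0$ because $p>2$ (the case $k>1-3/p$ is handled at the end). Choose $\psi\in\mathcal C^\infty(\R^3)$ with $\psi\equiv 0$ near $\overline{\Omega'}$ and $\psi\equiv 1$ outside $B_{R_0+1}$ and extend $\psi\mathbf u,\psi\pi$ by zero in $\Omega'$. Then $(\psi\mathbf u,\psi\pi)$ solves in $\R^3$ the Stokes system $-\Delta(\psi\mathbf u)+\nabla(\psi\pi)=\mathbf F$, $\mathrm{div}(\psi\mathbf u)=G$ with $\mathbf F=-\mathbf u\,\Delta\psi-2\,\nabla\psi\!\cdot\!\nabla\mathbf u+\pi\,\nabla\psi$ and $G=\nabla\psi\!\cdot\!\mathbf u$ supported in the shell $\{R_0\le r\le R_0+1\}$; being compactly supported, $(\mathbf F,G)$ belongs to $W^{-1,p}_l(\R^3)\times W^{0,p}_l(\R^3)$ and to $W^{0,p}_l(\R^3)\times W^{1,p}_l(\R^3)$ for every $l\in\Z$, and likewise with $p$ replaced by $2$. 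Because $p>2$, the compatibility condition \eqref{CCR^3} with base index $0$ is empty (it involves $N_{[1-3/p']}$ and $1-3/p'<0$), so Theorem~\ref{theorem de Stokes dans R^3} provides a solution $(\mathbf z,\sigma)$ of $-\Delta\mathbf z+\nabla\sigma=\mathbf F$, $\mathrm{div}\,\mathbf z=G$ in $\R^3$ lying in $\big(W^{2,p}_1(\R^3)\cap W^{2,2}_1(\R^3)\big)\times\big(W^{1,p}_1(\R^3)\cap W^{1,2}_1(\R^3)\big)$: solve first with exponent $2$ to get the (unique) solution in $W^{2,2}_1(\R^3)\times W^{1,2}_1(\R^3)$, which outside the shell is an analytic, decaying Stokes flow, hence $O(1/r)$, and therefore also lies in $W^{2,p}_1(\R^3)\times W^{1,p}_1(\R^3)$. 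Since $k\le 0$, the embeddings $W^{2,p}_1(\R^3)\hookrightarrow W^{1,p}_0(\R^3)\hookrightarrow W^{1,p}_k(\R^3)$ and $W^{1,p}_1(\R^3)\hookrightarrow W^{0,p}_0(\R^3)\hookrightarrow W^{0,p}_k(\R^3)$ show that $(\psi\mathbf u-\mathbf z,\psi\pi-\sigma)\in W^{1,p}_k(\R^3)\times W^{0,p}_k(\R^3)$ solves the homogeneous Stokes system in $\R^3$, so by Proposition~\ref{caracterisation du noyau dans R^3} there is $(\mathbf p,q)\in N_{[1-k-3/p]}$ with $\psi\mathbf u-\mathbf z=\mathbf p$ and $\psi\pi-\sigma=q$. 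I would set $(\boldsymbol{\lambda},\mu):=-(\mathbf p,q)\in N_{[1-k-3/p]}$ and $\mathbf v:=\mathbf u+\boldsymbol{\lambda}$, $\theta:=\pi+\mu$ on $\Omega$; then $\mathbf v=\mathbf z$ and $\theta=\sigma$ outside $B_{R_0+1}$, so $(\mathbf v,\theta)\in\big(W^{2,p}_{k+1}(\Omega)\cap W^{2,2}_1(\Omega)\big)\times\big(W^{1,p}_{k+1}(\Omega)\cap W^{1,2}_1(\Omega)\big)$ (the weighted-$L^2$ membership at infinity from the decay of $(\mathbf z,\sigma)$, near $\Gamma$ from $W^{m,p}_{loc}\subset W^{m,2}_{loc}$), and, exactly as in the first part, $(\mathbf v,\theta)$ solves \eqref{problem stokes du noyau} for this $(\boldsymbol{\lambda},\mu)$. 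By the uniqueness in \eqref{problem stokes du noyau}, $(\mathbf v,\theta)$ is the function in the statement, whence $(\mathbf u,\pi)=(\mathbf v-\boldsymbol{\lambda},\theta-\mu)$. If instead $k>1-3/p$, then $N_{[1-k-3/p]}=\{(\mathbf 0,0)\}$; the same argument forces $\boldsymbol{\lambda}=\mathbf 0$, $\mu=0$, so $(\mathbf u,\pi)$ solves the homogeneous exterior Stokes--Navier problem in $W^{2,2}_1(\Omega)\times W^{1,2}_1(\Omega)$ and vanishes by the Hilbertian uniqueness of \cite{DMR-2019}, giving $\mathcal N^p_{k+1}(\Omega)=\{(\mathbf 0,0)\}$.

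The main obstacle I anticipate is the bookkeeping at infinity in the inclusion $\subseteq$: the whole-space solution $(\mathbf z,\sigma)$ must be chosen so as to be simultaneously a genuine solution with the prescribed compactly supported data, decaying enough that $(\psi\mathbf u-\mathbf z,\psi\pi-\sigma)$ lands in $W^{1,p}_k(\R^3)\times W^{0,p}_k(\R^3)$ — so that Proposition~\ref{caracterisation du noyau dans R^3} applies with \emph{exactly} the index $k$ and returns a polynomial pair of degree $[1-k-3/p]$, with nothing of higher degree slipping in — and regular and decaying enough to belong to the Hilbert weighted space $W^{2,2}_1(\Omega)$ required in \eqref{problem stokes du noyau}; this is precisely where the hypothesis $p>2$ enters (it annihilates the compatibility condition) and where one uses the asymptotics of whole-space Stokes flows. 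Establishing the well-posedness of \eqref{problem stokes du noyau} in $W^{2,p}_{k+1}\cap W^{2,2}_1$ via the cut-off/Theorem~\ref{problem mixte}/Theorem~\ref{theorem de Stokes dans R^3} scheme is the other substantial step; the algebraic verifications that $(\mathbf v-\boldsymbol{\lambda},\theta-\mu)$ solves the homogeneous system are routine.
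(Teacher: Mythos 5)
Your proof is correct and, for the main inclusion, follows essentially the same route as the paper: reduce to a whole-space Stokes system with compactly supported right-hand side, solve it in $W^{2,2}_{1}(\R^3)\times W^{1,2}_{1}(\R^3)$ via Theorem~\ref{theorem de Stokes dans R^3}, and identify the difference with the original solution as a polynomial pair in $N_{[1-k-3/p]}$ using Proposition~\ref{caracterisation du noyau dans R^3}. The only technical variant is that you localize with a cut-off vanishing near $\overline{\Omega'}$ (data supported in a shell), whereas the paper extends $(\textbf{\textit{u}},\pi)$ across $\Gamma$ into $\Omega'$ (data supported in $\overline{\Omega'}$); the two devices are interchangeable here. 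You are in fact more complete than the paper, which only proves the inclusion $\subseteq$ and leaves the reverse inclusion, the well-posedness of \eqref{problem stokes du noyau}, and the triviality claim for $k>1-3/p$ implicit. One small point to tighten: in the case $k\geq 1$ your embedding $W^{1,p}_{0}(\R^3)\hookrightarrow W^{1,p}_{k}(\R^3)$ goes the wrong way, so the difference $\psi\textbf{\textit{u}}-\textbf{\textit{z}}$ only lies in a sum of weighted spaces with distinct indices; you should then invoke Theorem~2.2 of the paper (polynomials in $W^{1,p}_{k}+W^{1,q}_{l}$) rather than Proposition~\ref{caracterisation du noyau dans R^3} with a single index to conclude that it is a polynomial of the right degree.
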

\begin{proof}
Let us assume that $(\textbf{\textit{u}},\pi)$ belongs to $\mathcal{N}_{k+1}^{p}(\Omega)$. 
The pair $(\textbf{\textit{u}},\pi)$ has an extension $(\widetilde{\textbf{\textit{u}}},\widetilde{\pi})$ that belongs to $W^{2,p}_{k+1}(\R^3)\times{{W}}^{1,p}_{k+1}(\R^3)$. 
Set now
$$
\textbf{\textit{F}}=-\Delta\,\widetilde{\textbf{\textit{u}}}+\nabla\,\widetilde{{\pi}}\quad\text{and}\quad e=\mathrm{div}\,\widetilde{\textbf{\textit{u}}}.
$$
Then the pair $(\textbf{\textit{F}},e)$ belongs to $W^{0,p}_{k+1}(\R^3)\times W^{1,p}_{k+1}(\R^3)$ and has a compact support. Therefore $(\textbf{\textit{F}},e)$ also belongs to $W^{0,2}_{1}
(\R^3)\times W^{1,2}_{1}(\R^3)$. It follows from [Theorem 3.9,\cite{Alliot_M3AS_1999}], that there exists a unique solution $(\textbf{\textit{v}},{\theta})\in W^{2,2}_{1}(\R^3)\times W^{1,2}_{1}(\R^3)$
 such that
$$-\Delta\,\textbf{\textit{v}}+\nabla\,{\theta}=-\Delta\,\widetilde{\textbf{\textit{u}}}+\nabla\,\widetilde{{\pi}}\quad\text{and}\quad
\mathrm{div}\,\textbf{\textit{v}}=\mathrm{div}\,\widetilde{\textbf{\textit{u}}}\quad\text{in}\quad\R^3 .$$

\noindent It follows that $({\textbf{\textit{v}}}-\widetilde{\textbf{\textit{u}}},{\theta}-\widetilde{\pi})$ belongs to 
$\left(W^{2,2}_{1}(\R^3)+{{W}}^{2,p}_{k+1}(\R^3) \right) \times 
\left(W^{1,2}_{1}(\R^3)+ W^{1,p}_{k+1}(\R^3) \right)$. Hence, there exits $(\boldsymbol{\lambda},\mu)\in N_{[1-k-3/p]}$ such that $({\textbf{\textit{v}}}-\widetilde{\textbf{\textit{u}}},{\theta}-\widetilde{\pi})=(\boldsymbol{\lambda},\mu)$,
Thus, $(\textbf{\textit{v}},\theta)$ belongs to $\left(W^{2,2}_{1}(\R^3)\cap{{W}}^{2,p}_{k+1}(\R^3) \right) \times 
\left(W^{1,2}_{1}(\R^3)\cap W^{1,p}_{k+1}(\R^3)\right)$,  its restriction to $\Omega$ belongs to $\left(W^{2,2}_{1}(\Omega)\cap{{W}}^{2,p}_{k+1}(\Omega) \right) \times 
\left(W^{1,2}_{1}(\Omega)\cap W^{1,p}_{k+1}(\Omega)\right)$ and satisfies \eqref{problem stokes du noyau}.\\

\end{proof}
 

\noindent The next Theorem solves the problem $(\mathcal{S}_{T})$ when $p\geqslant 2$, our study is based on strong solutions in a Hilbertian framework (see~\cite{DMR-2019}), that's why we will take the data $\textbf{\textit{f}}$ and $\chi$ have a compact support.

\begin{theo}\label{solution forte f et xi a support compact}
Assume that $p\geqslant 2$. Let $\textbf{\textit{f}}\in W^{0,p}_{k+1}(\Omega)$, $\chi\in W^{1,p}_{k+1}(\Omega)$, $g\in W^{1+1/p',p}(\Gamma)$ and $\textbf{\textit{h}}\in W^{1-1/p,p}(\Gamma)$ such that $\textbf{\textit{f}}$ and $\chi$ have a compact support, $\textbf{\textit{h}}\cdot\textbf{\textit{n}}=0$ on $\Gamma$ and the following compatibility condition is satisfied 
\begin{equation}\label{CC TH-strong}
 \forall (\boldsymbol{\xi},\eta)\in \mathcal{N}_{-k+1}^{p'}(\Omega),\quad\quad
\displaystyle\int_{\Omega} \textbf{\textit{f }}\cdot\boldsymbol{\xi}d\textbf{\textit{x}}-\displaystyle\int_{\Omega}
\chi\,{\eta}d\textbf{\textit{x}}=\int_{\Gamma} g\Big( 2[\textrm{\textbf{D}}(\boldsymbol{\xi})\textbf{\textit{n}}]\cdot\textbf{\textit{n}}-\eta \Big)d\boldsymbol{\sigma}-\int_{\Gamma}\textbf{\textit{h}}\cdot
\boldsymbol{\xi}d\boldsymbol{\sigma}. 
\end{equation}

\noindent Then, the Stokes problem $(\mathcal{S}_{T})$ has a solution $(\textbf{\textit{u}},{\pi})\in W^{2,p}_{k+1}(\Omega)\times {{W}}^{1,p}_{k+1}(\Omega)$ unique up to an element of $\mathcal{N}_{k+1}^{p}(\Omega)$. In addition, we have the following estimate:
 
\begin{eqnarray*}\label{esti}
&&\inf _{(\boldsymbol{\lambda},\mu)\in\mathcal{N}_{k+1}^{p}(\Omega)} \left( \Vert \textbf{\textit{u}}+\boldsymbol{\lambda}\Vert_{W^{2,p}_{k+1}(\Omega)}
+\Vert {\pi}+{\mu}\Vert_{{W}^{1,p}_{k+1}(\Omega)}\right) \\ 
&&\leqslant C\Big(\Vert\textbf{\textit{f}}\Vert_{W^{0,p}_{k+1}(\Omega)}+\Vert\chi\Vert_{W^{1,p}_{k+1}(\Omega)}+\Vert \textbf{\textit{h}}\Vert_{W^{1-1/p,p}(\Gamma)}+\Vert g\Vert_{W^{1+1/p',p}(\Gamma)}\Big).
\end{eqnarray*}
\end{theo}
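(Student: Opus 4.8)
The plan is to reduce the exterior problem $(\mathcal{S}_T)$ to a combination of the whole-space Stokes result (Theorem~\ref{theorem de Stokes dans R^3}) — which captures the behavior at infinity — and the mixed Stokes problem on a bounded domain (Theorem~\ref{problem mixte}) — which handles the Navier boundary condition on $\Gamma$. First I would observe that, since $\textbf{\textit{f}}$ and $\chi$ have compact support, we may extend them by zero to $\widetilde{\textbf{\textit{f}}}\in W^{0,p}_{k+1}(\R^3)$ and $\widetilde{\chi}\in W^{1,p}_{k+1}(\R^3)$ (after first correcting $\chi$ so that the divergence constraint is globally compatible; near $\Gamma$ this is a matter of subtracting a suitable extension, exactly as in the treatment of $\chi_1,\chi_2$ in the proof of Theorem~\ref{problem mixte}). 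Because $\textbf{\textit{f}},\chi$ have compact support, $(\widetilde{\textbf{\textit{f}}},\widetilde{\chi})$ also lies in $W^{0,2}_{1}(\R^3)\times W^{1,2}_{1}(\R^3)$, so one can invoke both the $L^p$ and the Hilbertian versions of Theorem~\ref{theorem de Stokes dans R^3}. This produces a whole-space solution $(\textbf{\textit{z}},\tau)\in W^{2,p}_{k+1}(\R^3)\times W^{1,p}_{k+1}(\R^3)$ of $-\Delta\textbf{\textit{z}}+\nabla\tau=\widetilde{\textbf{\textit{f}}}$, $\mathrm{div}\,\textbf{\textit{z}}=\widetilde{\chi}$; the compatibility condition~\eqref{CCR^3} needed here should follow from~\eqref{CC TH-strong} together with Proposition~\ref{caracterisation du noyau dans R^3} relating $N_{[1-3/p'+k]}$ to the polynomial part of $\mathcal{N}^{p'}_{-k+1}(\Omega)$.

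Next I would look for the correction $(\textbf{\textit{w}},\sigma)=(\textbf{\textit{u}},\pi)-(\textbf{\textit{z}},\tau)|_\Omega$, which must solve the homogeneous-data exterior Stokes system $-\Delta\textbf{\textit{w}}+\nabla\sigma=\boldsymbol{0}$, $\mathrm{div}\,\textbf{\textit{w}}=0$ in $\Omega$, with the inhomogeneous Navier boundary condition $\textbf{\textit{w}}\cdot\textbf{\textit{n}}=g-\textbf{\textit{z}}\cdot\textbf{\textit{n}}$, $2[\mathbf{D}(\textbf{\textit{w}})\textbf{\textit{n}}]_\tau+\alpha\textbf{\textit{w}}_\tau=\textbf{\textit{h}}-2[\mathbf{D}(\textbf{\textit{z}})\textbf{\textit{n}}]_\tau-\alpha\textbf{\textit{z}}_\tau$ on $\Gamma$. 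At this point I would rely on the Hilbertian strong-solution theory of~\cite{DMR-2019}: the new boundary data lie in $W^{1+1/p',p}(\Gamma)\times W^{1-1/p,p}(\Gamma)$ (using the trace of $\textbf{\textit{z}}\in W^{2,p}_{k+1}(\R^3)$ on the compact $\Gamma$) and also, because everything is smooth near $\Gamma$, in the $H^{3/2}\times H^{1/2}$ spaces required by~\cite{DMR-2019}; the $L^2$-compatibility condition for that theory is again inherited from~\eqref{CC TH-strong}. This gives a solution $(\textbf{\textit{w}},\sigma)\in W^{2,2}_{1}(\Omega)\times W^{1,2}_{1}(\Omega)$, and adding back $(\textbf{\textit{z}},\tau)$ yields a pair $(\textbf{\textit{u}},\pi)$ solving $(\mathcal{S}_T)$ that is known to lie in $W^{2,2}_1\times W^{1,2}_1$ far from $\Gamma$ and has the right $W^{2,p}_{k+1}$ decay at infinity from the $\textbf{\textit{z}}$ part.

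The remaining — and I expect main — step is the bootstrap from $L^2$ to $L^p$ regularity near the boundary, which is where Theorem~\ref{problem mixte} enters. I would introduce a cutoff $\varphi$ equal to $1$ on a neighborhood of $\Gamma$ and supported in $B_R$ for some $R>R_0$, write $\varphi\textbf{\textit{u}}$, and check that $(\varphi\textbf{\textit{u}},\varphi\pi)$ solves a mixed problem of the form~\eqref{Problem oxiliere } on $\Omega_R$: the interior forcing picks up commutator terms $-\textbf{\textit{u}}\Delta\varphi-2\nabla\textbf{\textit{u}}\cdot\nabla\varphi+\pi\nabla\varphi$ which lie in $L^p(\Omega_R)$ (since on the annulus $\mathrm{supp}\,\nabla\varphi$ we already have $\textbf{\textit{u}}\in W^{2,2}_1$ hence, by Sobolev embedding in the bounded annulus, $\nabla\textbf{\textit{u}},\pi\in L^p$ for the relevant $p$), the Navier data on $\Gamma$ is unchanged, and on $\Gamma_R=\partial B_R$ the Dirichlet datum is the trace of $\varphi\textbf{\textit{u}}$, which lies in $W^{1+1/p',p}(\Gamma_R)$ because $\textbf{\textit{u}}$ is already $W^{2,p}$ there (coming from $\textbf{\textit{z}}$). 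Theorem~\ref{problem mixte} then upgrades $\varphi\textbf{\textit{u}}$ to $W^{2,p}(\Omega_R)$ and $\varphi\pi$ to $W^{1,p}(\Omega_R)$; combined with the already-established $W^{2,p}_{k+1}$ behavior of $(1-\varphi)(\textbf{\textit{u}},\pi)=(1-\varphi)(\textbf{\textit{z}},\tau)$ at infinity, we conclude $(\textbf{\textit{u}},\pi)\in W^{2,p}_{k+1}(\Omega)\times W^{1,p}_{k+1}(\Omega)$. As in the Hilbertian case, $p>6$ would be reached by first running the argument at $p=6$ and then iterating, using the Sobolev embeddings quoted in the proof of Theorem~\ref{problem mixte}. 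Uniqueness up to $\mathcal{N}^p_{k+1}(\Omega)$ is immediate from the definition of that kernel, and the estimate follows by the open-mapping theorem applied to the (now established) bounded bijection from the quotient space onto the data space, or equivalently by tracking the constants through each of the three invoked theorems. The subtle points to watch are the precise matching of the compatibility conditions across~\eqref{CCR^3}, the $L^2$-theory of~\cite{DMR-2019}, and~\eqref{CC TH-strong} — in particular identifying $\mathcal{N}^{p'}_{-k+1}(\Omega)$, via Proposition~\ref{caracterisation du noyau}, with the data against which the whole-space and bounded-domain solvability conditions are tested.
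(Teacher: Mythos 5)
There is a genuine gap, and it sits exactly where the real work of the paper's proof lies: the far-field decay of your correction term. After you solve the whole-space problem for $(\textbf{\textit{z}},\tau)\in W^{2,p}_{k+1}(\R^3)\times W^{1,p}_{k+1}(\R^3)$ and then solve the exterior problem with zero interior data by the Hilbertian theory of~\cite{DMR-2019}, the correction $(\textbf{\textit{w}},\sigma)$ is only known to lie in $W^{2,2}_{1}(\Omega)\times W^{1,2}_{1}(\Omega)$. It does not vanish outside a ball, and generic elements of $W^{2,2}_{1}$ decay too slowly to belong to $W^{2,p}_{k+1}$ once $p>2$ and $k\geq 1$. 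So your assertion that $(1-\varphi)(\textbf{\textit{u}},\pi)=(1-\varphi)(\textbf{\textit{z}},\tau)$, i.e.\ that the behaviour at infinity comes entirely from the $\textbf{\textit{z}}$ part, is false, and the cutoff argument with Theorem~\ref{problem mixte} only repairs the solution near $\Gamma$, not at infinity. Upgrading the weighted-$L^2$ decay of the exterior solution to weighted-$L^p$ decay is precisely the core of the paper's argument: one extends the $L^2$ solution into $\Omega'$ by an auxiliary Dirichlet Stokes problem, shows that the resulting data $\textbf{\textit{F}}=-\Delta\widetilde{\textbf{\textit{u}}}+\nabla\widetilde{\pi}$, $e=\mathrm{div}\,\widetilde{\textbf{\textit{u}}}$ (which contain surface distributions on $\Gamma$) belong to $W^{-1,p}_{k}(\R^3)\times W^{0,p}_{k}(\R^3)$, verifies the whole-space compatibility condition~\eqref{CCR^3} for $(\textbf{\textit{F}},e)$ using~\eqref{CC TH-strong} together with the structure of $\mathcal{N}^{p'}_{-k+1}(\Omega)$ from Proposition~\ref{caracterisation du noyau}, and then identifies $(\widetilde{\textbf{\textit{u}}},\widetilde{\pi})$ with the $W^{1,p}_{k}\times W^{0,p}_{k}$ whole-space solution up to an element of $N_{[1-3/p-k]}$. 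None of this appears in your plan.

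A second, related problem is your very first step: you claim the compatibility condition~\eqref{CCR^3} for the zero extensions $(\widetilde{\textbf{\textit{f}}},\widetilde{\chi})$ follows from~\eqref{CC TH-strong}. It does not, in general. Condition~\eqref{CC TH-strong} is tested against the exterior kernel elements $(\boldsymbol{\xi},\eta)=(\textbf{\textit{v}}(\boldsymbol{\lambda})-\boldsymbol{\lambda},\theta(\boldsymbol{\lambda})-\mu)$, not against the polynomials $(\boldsymbol{\lambda},\mu)$ themselves; the discrepancy consists of terms like $\int_{\Omega}\textbf{\textit{f}}\cdot\textbf{\textit{v}}(\boldsymbol{\lambda})\,d\textbf{\textit{x}}-\int_{\Omega}\chi\,\theta(\boldsymbol{\lambda})\,d\textbf{\textit{x}}$ plus boundary integrals involving $\textbf{\textit{u}}$, $\pi$, $\textbf{\textit{u'}}$, $\pi'$, and there is no reason these cancel for the bare extensions. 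When $k\geq 3/p'-1$ the space $N_{[1-3/p'+k]}$ is nontrivial, so your whole-space solve for $(\textbf{\textit{z}},\tau)$ in $W^{2,p}_{k+1}\times W^{1,p}_{k+1}$ may simply not be available. This is why the paper applies Theorem~\ref{theorem de Stokes dans R^3} not to $(\widetilde{\textbf{\textit{f}}},\widetilde{\chi})$ but to the extension of the actual $L^2$ solution, for which the cancellations needed in~\eqref{condition de compatibilite sur F et e} can be carried out (and why the general-data Theorem~\ref{solution forte f et xi} reserves the ``extend and solve in $\R^3$ first'' strategy for the range $k<3/p'-1$, where no compatibility is required). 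To repair your proof you would have to either restrict to that range or insert the extension-and-identification argument for the $L^2$ correction, which is essentially the paper's proof.
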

\begin{proof}
Observe first that the uniqueness is a straightforward consequence of Proposition~\ref{caracterisation du noyau}. We now divide the proof of the theorem into 
several parts.\\

$\bullet$ \textsc{Compatibility condition}. In this part, we prove that~\eqref{CC TH-strong} is a necessary condition. 

Let $(\boldsymbol{\xi},\eta)$ be in $\mathcal{N}^{p'}_{-k+1}(\Omega)$. For any ($\boldsymbol{\varphi},\psi)\in\mathcal{D}(\overline{\Omega})\times\mathcal{D}(\overline{\Omega}).$ Using the same calculation as in the proof of \cite[Theorem 3.7]{DMR-2019}, we have 

\begin{eqnarray}\label{CC Green's formular p}
\nonumber &&\displaystyle\int_{\Omega}\big[\big(-\Delta\,\boldsymbol{\varphi}+\nabla\,\psi\big)\cdot\boldsymbol{\xi}-\eta\,\mathrm{div}\,\boldsymbol{\varphi}\big] d\textbf{\textit{x}}\\
&&=
\int_{\Gamma}({\boldsymbol{\varphi}\cdot\textbf{\textit{n}}})\Big(2[\mathrm{\textbf{D}}(\boldsymbol{\xi})\textbf{\textit{n}}]\cdot{\textbf{\textit{n}}}-\eta \Big)d\boldsymbol{\sigma}
-\int_{\Gamma} \boldsymbol{\xi}_{\tau}\cdot\Big(2[\mathrm{\textbf{D}}(\boldsymbol{\varphi})\textbf{\textit{n}}]_{\tau}+\alpha\,\boldsymbol{\varphi}_{\tau}\Big)d\boldsymbol{\sigma}.
\end{eqnarray}

Then, the last Green's formula 
holds for any pair $(\boldsymbol{\varphi},\psi)\in W^{2,p}_{k+1}(\Omega)\times W^{1,p}_{k+1}(\Omega)$ by density.
In particular, if $(\textbf{\textit{u}},\pi)\in W^{2,p}_{k+1}(\Omega)\times W^{1,p}_{k+1}(\Omega)$ is a solution of $(\mathcal{S}_{T})$, then~\eqref{CC TH-strong} holds.\\
\item[$\bullet$] \textsc{Existence}. Here we prove that problem ($\mathcal{S}_T$) has a solution $(\textit{\textbf{u}},\pi)$ that belongs to $ W_k^{1,p}(\Omega)\times W_k^{0,p}(\Omega)$. We start with the case $2\leqslant p\leqslant 6$, the proof is made of two steps.
\vspace*{-1cm}

\vspace*{0.95cm}
\item[Step 1.] The case $g=0$.\\
\noindent Since  $p\geqslant 2$ and $(\textbf{\textit{f}},\chi)$ have a support compact, then we have $(\textbf{\textit{f}},\chi)$ belongs to $W^{0,2}_{1}(\Omega)\times W^{1,2}_{1}(\Omega)$. In addition, its clear that $\textbf{\textit{h}}$ belongs to $H^{1/2}_{}(\Gamma)$.
Thanks to \cite[Theorem 3.6]{DMR-2019}, problem $(\mathcal{S}_{T})$ has a solution $(\textbf{\textit{u}},{\pi})\in W^{2,2}_{1}(\Omega)\times W^{1,2}_{1}(\Omega)$. 
It remains now to prove that $(\textbf{\textit{u}},{\pi})$ belongs to $W^{1,p}_{k}(\Omega)\times {W}^{0,p}_{k}(\Omega)$.
To that end, we shall use again properties of the Stokes problem in the whole space $\R^3$. Now, we first need appropriate extensions of $\textbf{\textit{u}}$ and $\pi$
defined in $\R^3$. So let us consider the following Stokes problem in the bounded domain $\Omega'$
\begin{equation}
\label{probleme borne omega'}
\begin{cases}
-\Delta\textbf{\textit{u'}}+\nabla\pi'=\boldsymbol{0}\quad\text{and}\quad\mathrm{div}\,\textbf{\textit{u'}}=0\quad\text{in}\quad\Omega',\\
\textbf{\textit{u'}}=\textbf{\textit{u}}\quad\text{on}\quad\Gamma.
\end{cases}
\end{equation}
Since $\textit{\textbf{u}}\cdot\textit{\textbf{n}}=0$, problem~\eqref{probleme borne omega'} has a solution $(\textit{\textbf{u'}},\pi')\in H^2(\Omega')\times H^1(\Omega')$
(see for instance~\cite{Amrouche_CMJ_1994} or~\cite{Galdi_book_1}).
\noindent Setting
\begin{eqnarray*}\widetilde{\textbf{\textit{u}}}=
\begin{cases}
{\textbf{\textit{u}}}\quad\text{in}\quad\Omega,\\
{\textbf{\textit{u'}}}\quad\text{in}\quad\Omega',\\
\end{cases}
\quad\text{and}\quad\quad
\widetilde{\pi}=
\begin{cases}
\pi\quad\text{in}\quad\Omega,\\
\pi'\quad\text{in}\quad\Omega'.\\
\end{cases}
\end{eqnarray*}
Then clearly, the pair $(\widetilde{\textbf{\textit{u}}},\widetilde{\pi})$ belongs to $ W^{1,2}_1(\R^3)\times {W}^{0,2}_{1}(\R^3)$. 
The goal is now to identify $(\widetilde{\textbf{\textit{u}}},\widetilde{\pi})$ with a solution of the Stokes problem in $\R^3$
that belongs to $W^{1,p}_{k}(\Omega)\times {W}^{0,p}_{k}(\Omega)$.
Let us set 
\begin{equation}
\label{def.F.e}
\textbf{\textit{F}}=-\Delta\,\widetilde{\textit{\textbf{u}}}+\nabla\widetilde{\pi}\quad\text{and}\quad e=\mathrm{div}\,\widetilde{\textit{\textbf{u}}}.
\end{equation}
 In order to apply Theorem~\ref{theorem de Stokes dans R^3}, we need to show that $(\textbf{\textit{F}},e)$ belongs to $W^{-1,p}_{k}(\R^3)\times W^{0,p}_{k}(\R^3)$ and satisfies~\eqref{CCR^3}. Therefore, denoting by $\widetilde{\textit{\textbf{f }}}\in W_{k+1}^{0,p}(\R^3)$ the extension of $\textit{\textbf{f }}$ by zero in $\Omega'$ , we deduce that for any 
$\boldsymbol{\varphi}\in \mathcal{D}(\R^3)$
\begin{eqnarray}
\label{formule de Green pour le prolongement F}
&&\langle\textbf{\textit{F}},\boldsymbol{\varphi}\rangle_{\boldsymbol{\mathcal{D'}}(\R^3)\times\mathcal{D}(\R^3)} 
= \displaystyle\int_{\R^3} \widetilde{\textbf{\textit{f}}}\cdot\boldsymbol{\varphi}d\textbf{\textit{x}}+2\int_{\Gamma}\boldsymbol{\varphi}\cdot\Big(\mathrm{\textbf{D}}({\textbf{\textit{u}}})\textbf{\textit{n}}
-\mathrm{\textbf{D}}({\textbf{\textit{u'}}})\textbf{\textit{n}}\Big)d\boldsymbol{\sigma}\\
\nonumber &&+\int_{\Gamma}\big(\boldsymbol{\varphi}\cdot\textbf{\textit{n}}\big)\big(\pi'-\pi\big)d\boldsymbol{\sigma}
- \int_{\Gamma}(\boldsymbol{\varphi}\cdot\textbf{\textit{n}})\chi d\boldsymbol{\sigma}.
\end{eqnarray}
This calculations are the same as in the previous proof of \cite[Theorem 3.7]{DMR-2019}.\\
\noindent Since  $\big(2\mathrm{\textbf{D}}({\textbf{\textit{u}}})\textbf{\textit{n}}
-2\mathrm{\textbf{D}}({\textbf{\textit{u'}}})\textbf{\textit{n}}\big)\mid_{\Gamma}$ and $\big(\pi'-\pi\big)\mid_{\Gamma}$ belongs to $H^{1/2}(\Gamma)$.
\begin{eqnarray*}
&&\Big\vert\langle\textbf{\textit{F}},\boldsymbol{\varphi}\rangle_{\boldsymbol{\mathcal{D'}}(\R^3)\times\mathcal{D}(\R^3)} 
\Big\vert \leqslant  \Big\vert\displaystyle\int_{\R^3} \widetilde{\textbf{\textit{f}}}\cdot\boldsymbol{\varphi}d\textbf{\textit{x}}\Big\vert +\Big\vert \langle\boldsymbol{\varphi}\cdot\textbf{\textit{n}},\chi\rangle_{H^{-1/2}(\Gamma)\times H^{1/2}(\Gamma)}\Big\vert\\
&&+\Big\vert 2\langle\mathrm{\textbf{D}}({\textbf{\textit{u}}})\textbf{\textit{n}}
-\mathrm{\textbf{D}}({\textbf{\textit{u'}}})\textbf{\textit{n}},\boldsymbol{\varphi}\rangle_{H^{1/2}(\Gamma)\times H^{-1/2}(\Gamma)}\Big\vert
+\Big\vert\langle\boldsymbol{\varphi}\cdot\textbf{\textit{n}},\pi'-\pi\rangle_{H^{-1/2}(\Gamma)\times H^{1/2}(\Gamma)}\Big\vert\\
&\leqslant & \Vert\widetilde{\textbf{\textit{f}}}\Vert_{W^{0,p}_{k+1}(\R^3)}\Vert\boldsymbol{\varphi}\Vert_{W^{0,p'}_{-2}(\R^3)}+\Vert\chi\Vert_{W^{1,2}_{k+1}(\Omega)}\Vert\boldsymbol{\varphi}\Vert_{H^{-1/2}(\Gamma)}\\
&&+\Vert 2\mathrm{\textbf{D}}({\textbf{\textit{u}}})\textbf{\textit{n}}
-2\mathrm{\textbf{D}}({\textbf{\textit{u'}}})\textbf{\textit{n}}\Vert_{H^{1/2}(\Gamma)}\Vert\boldsymbol{\varphi}\Vert_{H^{-1/2}(\Gamma)}+\Vert\boldsymbol{\varphi}\cdot\textbf{\textit{n}}\Vert_{H^{-1/2}(\Gamma)}\Vert\pi'-\pi\Vert_{H^{1/2}(\Gamma)}\\
&\leqslant & \Vert{\textbf{\textit{f}}}\Vert_{W^{0,p}_{k+1}(\Omega)}\Vert\boldsymbol{\varphi}\Vert_{W^{1,p'}_{-k}(\R^3)}+ C\Vert\boldsymbol{\varphi}\Vert_{H^{-1/2}(\Gamma)}
\end{eqnarray*}
As we have 
\begin{eqnarray*}
H^{1/2}(\Gamma)\hookrightarrow W^{-1/6,6}(\Gamma)\hookrightarrow W^{-1/p,p}(\Gamma), \text{for any } 2\leqslant p\leqslant 6
\end{eqnarray*}
 We obtain 
\begin{eqnarray*}
\Vert\boldsymbol{\varphi}\Vert_{H^{-1/2}(\Gamma)} &\leqslant & C\Vert\boldsymbol{\varphi}\Vert_{W^{1/p,p'}(\Gamma)}\\
&\leqslant & C\Vert\boldsymbol{\varphi}\Vert_{W^{1,p'}_{-k}(\Omega)}\\
&\leqslant & C\Vert\boldsymbol{\varphi}\Vert_{W^{1,p'}_{-k}(\R^3)}
\end{eqnarray*}
Then, we have
\begin{eqnarray*}
\Big\vert\langle\textbf{\textit{F}},\boldsymbol{\varphi}\rangle_{\boldsymbol{\mathcal{D'}}(\R^3)\times\mathcal{D}(\R^3)} 
\Big\vert \leqslant C\Vert\boldsymbol{\varphi}\Vert_{W^{1,p'}_{-k}(\R^3)}.
\end{eqnarray*}
Since $\mathcal{D}(\R^3)$ is dense in $W^{1,p'}_{-k}(\R^3)$ , 
then \eqref{formule de Green pour le prolongement F} is still valid for any
$\boldsymbol{\varphi} \in W^{1,p'}_{-k}(\R^3)$ which implies that $\textit{\textbf{F }}$ belongs to $W_{k}^{-1,p}(\R^3)$.\\
\noindent Now for any $\phi \in \mathcal{D}(\R^3)$, we have
\begin{align}\label{formule de Green pour le prolongement2}
\nonumber &\langle e,{\phi}\rangle_{\mathcal{D'}(\R^3)\times\mathcal{D}(\R^3)} 
=\langle\mathrm{div}\,\widetilde{\textbf{\textit{u}}},\phi\rangle_{\mathcal{D'}(\R^3)\times\mathcal{D}(\R^3)} 
=-\displaystyle\int_{\R^3}\widetilde{\textbf{\textit{u}}}\cdot\nabla\,\phi d\textbf{\textit{x}}\\
\nonumber &=\displaystyle\int_{\Omega}\mathrm{div}\,\textbf{\textit{u }}\,\phi d\textbf{\textit{x}}
+\displaystyle\int_{\Omega'}\mathrm{div}\,\textbf{\textit{u' }}\,\phi d\textbf{\textit{x}}-\int_{\Gamma} (\textbf{\textit{u}}
\cdot\textbf{\textit{n}})\phi d\boldsymbol{\sigma}+\int_{\Gamma} (\textbf{\textit{u'}}\cdot\textbf{\textit{n}})\phi d\boldsymbol{\sigma}\\
&=\displaystyle\int_{\Omega}\chi\,\phi d\textbf{\textit{x}}.
\end{align}
Since $\chi\in W^{1,p}_{k+1}(\Omega)$. Then, we have
\begin{eqnarray*}\label{estimation e}
\Big\vert \langle e,{\phi}\rangle_{\mathcal{D'}(\R^3)\times\mathcal{D}(\R^3)}\Big\vert\leqslant \Vert\chi\Vert_{W^{0,p}_{k}(\Omega)}\Vert\phi\Vert_{W^{0,p'}_{-k}(\Omega)}\leqslant C\Vert\phi\Vert_{W^{0,p'}_{-k}(\Omega)}.
\end{eqnarray*}
Due to the density of $\mathcal{D}(\R^3)$ in $W^{0,p'}_{-k}(\R^3)$, then~\eqref{formule de Green pour le prolongement2} is still valid for any $\phi \in W^{0,p'}_{-k}(\R^3)$, which implies that  $e$ belongs to $W^{0,p}_{k}(\R^3)$. As a result, we have proved that $(\textit{\textbf{F}},e)$ belongs to $W_k^{-1,p}(\R^3)\times W_k^{0,p}(\R^3)$.\\
\noindent Let us now prove that $\textit{\textbf{F}}$ and $e$ satisfy~\eqref{CCR^3}, which in view of~\eqref{formule de Green pour le prolongement F} and~\eqref{formule de Green pour le prolongement2}, amounts to prove that for any $(\boldsymbol{\lambda},\mu)$ in $N_{[1-3/p'+k]}$
\begin{align}
\label{condition de compatibilite sur F et e}
&\displaystyle\int_{\R^3} \widetilde{\textbf{\textit{f}}}\cdot\boldsymbol{\lambda}d\textbf{\textit{x}}-\displaystyle\int_{\Omega}\chi\,\mu d\textbf{\textit{x}}+
2\int_{\Gamma}\boldsymbol{\lambda}\cdot\Big(\textbf{D}(\textbf{\textit{u}})\textbf{\textit{n}}-\textbf{D}(\textbf{\textit{u'}})\textbf{\textit{n}}\Big)d\boldsymbol{\sigma}
\\
\nonumber &+\int_{\Gamma} \big(\boldsymbol{\lambda}\cdot\textbf{\textit{n}}\big)\big(\pi'-\pi\big) d\boldsymbol{\sigma}-\displaystyle\int_{\Gamma}(\boldsymbol{\lambda}\cdot\textbf{\textit{n}})\chi d\boldsymbol{\sigma}=0.
\end{align}
\noindent So let $(\boldsymbol{\lambda},\mu)\in N_{[1-3/p'+k]}$ and let $(\textit{\textbf{v}}(\boldsymbol{\lambda}),\theta(\boldsymbol{\lambda}))$ be in $W^{2,p'}_{-k+1}(\Omega)\cap W^{2,2}_{1}(\Omega)\times W^{1,p'}_{-k+1}(\Omega)\cap W^{1,2}_{1}(\Omega)$
such that the pair $(\textit{\textbf{v}}(\boldsymbol{\lambda})-\boldsymbol{\lambda},\theta(\boldsymbol{\lambda})-\mu)$ belongs to $\mathcal{N}^{p'}_{-k+1}(\Omega)$.\\
\noindent Now, for any  $(\textit{\textbf{v}},\theta)\in  W^{2,p'}_{-k+1}(\Omega)\cap W^{2,2}_{1}(\Omega)\times W^{1,p'}_{-k+1}(\Omega)\cap W^{1,2}_{1}(\Omega)$ such that $\mathrm{div}\,\textit{\textbf{v}}=0$, computations in $\Omega$ yields

\begin{eqnarray}\label{calcul.Omega1}
&&\int_{\Omega}\textit{\textbf{f}}\cdot\textit{\textbf{v }}(\boldsymbol{\lambda})\,d\textit{\textbf{x }}
-\int_\Omega\chi\theta(\boldsymbol{\lambda}) d\textit{\textbf{x }}\\
\nonumber &&-\int_{\Gamma}(\textit{\textbf{v }}(\boldsymbol{\lambda})\cdot\textit{\textbf{n }})\chi d\boldsymbol{\sigma}-2\int_{\Gamma}\textit{\textbf{u }}\cdot\textbf{D}(\textit{\textbf{v }}(\boldsymbol{\lambda}))\textit{\textbf{n }}d\boldsymbol{\sigma}\\
\nonumber &&+2\int_{\Gamma}\textit{\textbf{v }}(\boldsymbol{\lambda})\cdot\textbf{D}(\textit{\textbf{u }})\textit{\textbf{n }}d\boldsymbol{\sigma}-\int_{\Gamma}(\textit{\textbf{v }}(\boldsymbol{\lambda})\cdot\textit{\textbf{n }})\pi d\boldsymbol{\sigma}=0.
\end{eqnarray}

\noindent Now making the difference between~\eqref{calcul.Omega1} and~\eqref{CC TH-strong} with $\boldsymbol\xi=\textit{\textbf{v}}(\boldsymbol{\lambda})-\boldsymbol{\lambda}$
and $\eta=\theta(\boldsymbol{\lambda})-\mu$ and recalling that $\textit{\textbf{v }}(\boldsymbol{\lambda})\cdot\textit{\textbf{n }}=\boldsymbol{\lambda}\cdot\textit{\textbf{n }}$
on $\Gamma$, yields
 
\begin{eqnarray}
\label{calcul.Omega.CC}
  &&\int_{\Omega}\textit{\textbf{f}}\cdot\boldsymbol{\lambda}\,d\textit{\textbf{x }}
-\int_\Omega\chi\mu d\textit{\textbf{x }}-\int_{\Gamma}(\textit{\textbf{v }}(\boldsymbol{\lambda})-\boldsymbol{\lambda})\cdot\textit{\textbf{h }}d\boldsymbol{\sigma}\\
\nonumber &&-\int_{\Gamma}(\boldsymbol{\lambda}\cdot\textit{\textbf{n }})\chi d\boldsymbol{\sigma}-2\int_{\Gamma}\textit{\textbf{u }}\cdot\textbf{D}(\textit{\textbf{v }}(\boldsymbol{\lambda}))\textit{\textbf{n }}d\boldsymbol{\sigma}\\
\nonumber &&+2\int_{\Gamma}\textit{\textbf{v }}(\boldsymbol{\lambda})\cdot\textbf{D}(\textit{\textbf{u }})\textit{\textbf{n }}d\boldsymbol{\sigma}-\int_{\Gamma}(\textit{\textbf{v }}(\boldsymbol{\lambda})\cdot\textit{\textbf{n }})\pi d\boldsymbol{\sigma}=0.
\end{eqnarray}
\noindent Computations on $\Omega'$ yield 
 \begin{align*}
&\int_{\Omega'}\big(-\Delta\,{\textbf{\textit{u'}}}+\nabla\,{\pi'}\big)\cdot\boldsymbol{\lambda}d\textbf{\textit{x}}=0\\
&=-\int_{\Omega'}{\textbf{\textit{u'}}}\cdot\Delta\,\boldsymbol{\lambda} d\textbf{\textit{x}}+2\int_{\Gamma}\boldsymbol{\lambda}\cdot \mathrm{\textbf{D}}({\textbf{\textit{u'}}})\textbf{\textit{n}}d\boldsymbol{\sigma}
-2\int_{\Gamma}\textbf{\textit{u'}}\cdot\mathrm{\textbf{D}}(\boldsymbol{\lambda})\textbf{\textit{n}}d\boldsymbol{\sigma}-\int_{\Gamma}(\boldsymbol{\lambda}\cdot\textbf{\textit{n}})\pi'd\boldsymbol{\sigma}.
\end{align*}
The fact that $\textit{\textbf{u }}=\textit{\textbf{u' }}$ on $\Gamma$, implies
$$\int_{\Omega'}\textit{\textbf{u'}}\cdot\Delta\boldsymbol{\lambda}\,d\textit{\textbf{x }}
=\int_{\Omega'}\textit{\textbf{u'}}\cdot\nabla\mu\,d\textit{\textbf{x }}=-\int_{\Omega'}\mu\,\mathrm{div}\,\textit{\textbf{u'}}\,d\textit{\textbf{x }}
+\int_{\sigma}\mu(\textit{\textbf{u'}}\cdot\textit{\textbf{n }})d\boldsymbol{\sigma}=0$$
and we deduce that
\begin{equation}
\label{integration sur omega'}
2\int_{\Gamma}\boldsymbol{\lambda}\cdot \mathrm{\textbf{D}}({\textbf{\textit{u'}}})\textbf{\textit{n}}d\boldsymbol{\sigma}
-2\int_{\Gamma}\textbf{\textit{u'}}\cdot\mathrm{\textbf{D}}(\boldsymbol{\lambda})\textbf{\textit{n}}d\boldsymbol{\sigma}-\int_{\Gamma}(\boldsymbol{\lambda}\cdot\textbf{\textit{n}})\pi'd\boldsymbol{\sigma}=0.
\end{equation}

\noindent Combining~\eqref{calcul.Omega.CC} and~\eqref{integration sur omega'} yields
\begin{eqnarray}
\begin{split}
\label{calcul.omega.omega'}
&&\int_{\R^3} \widetilde{\textbf{\textit{f}}}\cdot\boldsymbol{\lambda}d\textbf{\textit{x}}
-\displaystyle\int_{\Omega}\chi\,\mu d\textbf{\textit{x}}-\int_{\Gamma}\chi(\boldsymbol{\lambda}\cdot\textbf{\textit{n}})d\boldsymbol{\sigma}+
\int_{\Gamma}(\boldsymbol{\lambda}\cdot\textbf{\textit{n}})(\pi'-\pi) d\boldsymbol{\sigma}\\
&&-2\int_{\Gamma}\boldsymbol{\lambda}\cdot\mathrm{\textbf{D}}({\textbf{\textit{u'}}})\textbf{\textit{n}}d\boldsymbol{\sigma}+2\int_{\Gamma}\textbf{\textit{u}}\cdot\big(\mathrm{\textbf{D}}(\boldsymbol{\lambda})\textbf{\textit{n}}-\mathrm{\textbf{D}}(\textbf{\textit{v}}(\boldsymbol{\lambda}))\textbf{\textit{n}}\big)d\boldsymbol{\sigma}
\\
&&+2\int_{\Gamma}\textbf{\textit{v}}(\boldsymbol{\lambda})\cdot\mathrm{\textbf{D}}(\textbf{\textit{u }})\textbf{\textit{n}}d\boldsymbol{\sigma}-\int_{\Gamma} (\textbf{\textit{v}}(\boldsymbol{\lambda})
-\boldsymbol{\lambda})\textbf{\textit{h }}d\boldsymbol{\sigma}=0.
\end{split}
\end{eqnarray}

Due to the fact that $g=0$ on $\Gamma$ and using the Navier boundary condition in~\eqref{problem stokes du noyau}, we have 
\begin{align*}
&2\int_{\Gamma}\textbf{\textit{u}}\cdot\Big(\mathrm{\textbf{D}}(\boldsymbol{\lambda})\textbf{\textit{n}}-\mathrm{\textbf{D}}(\textbf{\textit{v}}(\boldsymbol{\lambda}))\textbf{\textit{n}}\Big)d\boldsymbol{\sigma}
=2\int_{\Gamma}\textbf{\textit{u}}_\tau\cdot\Big(\big[\textbf{D}(\boldsymbol{\lambda})\textbf{\textit{n}}\big]_\tau-\big[\textbf{D}(\textbf{\textit{v}}(\boldsymbol{\lambda}))\textbf{\textit{n}}\big]_\tau\Big) d\boldsymbol{\sigma}\\
&=\alpha\int_{\Gamma}\textit{\textbf{u }}_\tau\cdot\big(\textit{\textbf{v }}(\boldsymbol{\lambda})_\tau-\boldsymbol{\lambda}_\tau\big) d\boldsymbol{\sigma}.
\end{align*}

\noindent Next, using the fact that $\textit{\textbf{v }}(\boldsymbol{\lambda})\cdot\textit{\textbf{n }}=\boldsymbol{\lambda}\cdot\textit{\textbf{n }}$ on $\Gamma$,
\begin{align*}
2\int_{\Gamma}\textbf{\textit{v}}(\boldsymbol{\lambda})\cdot\mathrm{\textbf{D}}(\textbf{\textit{u }})\textbf{\textit{n}}d\boldsymbol{\sigma}
=2\int_{\Gamma}\textbf{\textit{v}}(\boldsymbol{\lambda})_\tau\cdot\big[\mathrm{\textbf{D}}(\textbf{\textit{u }})\textbf{\textit{n}}\big]_\tau d\boldsymbol{\sigma}
+2\int_{\Gamma}(\boldsymbol{\lambda}\cdot\textbf{\textit{n}})[\mathrm{\textbf{D}}(\textbf{\textit{u }})\textbf{\textit{n}}]\textbf{\textit{n}}d\boldsymbol{\sigma}.
\end{align*}

\noindent Finally, using that $\textbf{\textit{h}}\cdot\textbf{\textit{n}}=0$ on $\Gamma$, we obtain 
$$\int_{\Gamma} \big(\textbf{\textit{v}}(\boldsymbol{\lambda})-\boldsymbol{\lambda}\big)\textbf{\textit{h }}d\boldsymbol{\sigma}
=\int_{\Gamma} \big(\textbf{\textit{v}}(\boldsymbol{\lambda})_\tau-\boldsymbol{\lambda}_\tau\big)\textbf{\textit{h }}d\boldsymbol{\sigma}.$$

\noindent Combining these three expressions and after calculation, we obtain 
\begin{align*}\label{calcul sur le bord}
\nonumber &2\int_{\Gamma}\boldsymbol{\lambda}\cdot\mathrm{\textbf{D}}({\textbf{\textit{u}}})\textbf{\textit{n}}d\boldsymbol{\sigma}=2\int_{\Gamma}\textbf{\textit{u}}\cdot\big(\mathrm{\textbf{D}}(\boldsymbol{\lambda})\textbf{\textit{n}}-\mathrm{\textbf{D}}(\textbf{\textit{v}}(\boldsymbol{\lambda}))\textbf{\textit{n}}\big)d\boldsymbol{\sigma}+2\int_{\Gamma}\textbf{\textit{v}}(\boldsymbol{\lambda})\cdot\mathrm{\textbf{D}}(\textbf{\textit{u }})\textbf{\textit{n}}d\boldsymbol{\sigma}\\[4pt]
&
-\int_{\Gamma} (\textbf{\textit{v}}(\boldsymbol{\lambda})
-\boldsymbol{\lambda})\textbf{\textit{h }}d\boldsymbol{\sigma}.
\end{align*}
\noindent Plugging this in~\eqref{calcul.omega.omega'} allows to get~\eqref{condition de compatibilite sur F et e}.

\noindent  Therefore it follows from Theorem \ref{theorem de Stokes dans R^3}, that there exists a solution 
$(\widetilde{\textbf{\textit{z}}},\widetilde{q})\in(\textbf{\textit{W}}^{1,p}_{k}(\R^3)\times {W}^{0,p}_{k}(\R^3))$ satisfying the following Stokes problem:
\begin{eqnarray*}
-\Delta\,\widetilde{\textbf{\textit{z}}}+\nabla\,\widetilde{q}=\textit{\textbf{F}}\quad \text{and} 
\quad \mathrm{div}\,\widetilde{\textbf{\textit{z}}}=e\quad\text{in}\quad\R^3.
\end{eqnarray*}
Using~\eqref{def.F.e}, we obtain
\begin{eqnarray*}
-\Delta\,(\widetilde{\textbf{\textit{z}}}-\widetilde{\textbf{\textit{u}}})+\nabla\,(\widetilde{q}-\widetilde{\pi})=\boldsymbol{0}\quad\text{and} 
\quad \mathrm{div}\,(\widetilde{\textbf{\textit{z}}}-\widetilde{\textbf{\textit{u}}})=0\quad\text{in}\quad\R^3.
\end{eqnarray*}
It follows that $(\widetilde{\textbf{\textit{z}}}-\widetilde{\textbf{\textit{u}}},\widetilde{q}-\widetilde{\pi})$ belongs to 
$\left(W^{1,p}_{k}(\R^3)+{{W}}^{1,2}_{1}(\R^3) \right) \times 
\left(W^{0,p}_{k}(\R^3)+ W^{0,2}_{1}(\R^3) \right)$, then $(\widetilde{\textbf{\textit{z}}}-\widetilde{\textbf{\textit{u}}},\widetilde{q}-\widetilde{\pi})$
 also belongs to $N_{[1-3/p-k]}$. We deduce that there exist $(\boldsymbol{\lambda},\mu)\in N_{[1-3/p-k]}$, then  $\widetilde{\textbf{\textit{z}}}-
 \widetilde{\textbf{\textit{u}}}=\boldsymbol{\lambda}$ and $\widetilde{q}-\widetilde{\pi}=\mu$ which imply that the solution $(\textbf{\textit{u}},\pi)$
 belongs indeed to $W^{1,p}_{k}(\Omega)\times {{W}}^{0,p}_{k}(\Omega)$.\\

\noindent \textsc{Regularity}. Finally, we prove that the solution $(\textit{\textbf{u}},\pi)\in W_k^{1,p}(\Omega)\times W_k^{0,p}(\Omega)$ of ($\mathcal{S}_T$) established previously, belongs to $W_{k+1}^{2,p}(\Omega)\times W_{k+1}^{1,p}(\Omega)$. Here we use regularity arguments on the Stokes problem set in bounded domains and in the whole space~$\R^3$. 
To that end, we introduce the same partition of unity as in Theorem~\ref{theorem Neumann}. Let $(\widetilde{\textbf{\textit{u}}},\widetilde{\pi})\in W_{k}^{1,p}(\R^{3})\times W_k^{0,p}(\R^3)$ be an extension of $(\textit{\textbf{u}},\pi)$
to the whole space $\R^3$. We can write:
$$
\widetilde{\textbf{\textit{u}}}={\varphi}\widetilde{\textbf{\textit{u}}}+
{\psi}\widetilde{\textbf{\textit{u}}}\quad \text{and}\quad \widetilde{\pi}={\varphi}\widetilde{\pi}+{\psi}\widetilde{\pi}.
$$

\noindent Then it is enough to show that the pairs $({\varphi}\widetilde{\textbf{\textit{u}}},{\varphi}\widetilde{\pi})$ and $({\psi}\widetilde{\textbf{\textit{u}}},{\psi}\widetilde{\pi})$ 
belong to $W^{2,p}_{k+1}(\R^3)\times W^{1,p}_{k+1}(\R^3)$. To that end, consider first
\begin{equation}
\label{stokes dans R^3 pour psiu}
 -\Delta\,({\psi}\widetilde{\textbf{\textit{u}}})+\nabla\,({\psi}\widetilde{\pi})=\textbf{\textit{f}}_{1}\quad \mathrm{and}\quad \mathrm{div}\,({\psi}\widetilde{\textbf{\textit{u}}})=\chi_{1} 
 \quad\text{in}\quad\R^3,
 \end{equation}
 where
$$\textbf{\textit{f}}_{1}=\textbf{\textit{f}}\psi
-(2\nabla\widetilde{\textbf{\textit{u}}}\nabla{\psi}+\textbf{\textit{u}}\Delta\,{\psi})+\widetilde{\pi}\nabla\,{\psi}\quad\text{ and } 
\quad \chi_{1}=\chi\psi+\widetilde{\textbf{\textit{u}}}\cdot\nabla\,\psi.
$$
We easily see that $\textbf{\textit{f}}_{1}$ and $\chi_1$ have bounded supports and belong to $L_{loc}^p(\R^3)\times W_{loc}^{1,p}(\R^3)$.
As a consequence, $(\textbf{\textit{f}}_{1},\chi_1)$ belongs to $W^{0,p}_{k+1}(\R^3)\times W^{1,p}_{k+1}(\R^3)$. Using the regularity of the Stokes problem (see Theorem~\ref{theorem de Stokes dans R^3}),  we deduce that the pair $(\psi\widetilde{\textbf{\textit{u}}},\psi\widetilde{\pi})$ 
also belongs to $W^{2,p}_{k+1}(\R^3)\times W^{1,p}_{k+1}(\R^3)$.

%

\noindent Consider now the system
$$
 -\Delta\,({\varphi}\widetilde{\textbf{\textit{u}}})+\nabla\,({\varphi}\widetilde{\pi})
 =\textbf{\textit{f}}_{2}\quad \mathrm{and}\quad \mathrm{div}\,({\varphi}\widetilde{\textbf{\textit{u}}})=\chi_{2},
$$
\noindent where $\textbf{\textit{f}}_2$ and $\chi_2$ have similar expressions as $\textbf{\textit{f}}_1$ and $\chi_1$ 
with $\psi$ remplaced by $\varphi$. It is easy to check that $(\textbf{\textit{f}}_{2},\chi_{2})$ belongs to $L^{\,p}(\Omega_{R+1})\times W^{1,p}(\Omega_{R+1})$. In particular, we have $\widetilde{\textbf{\textit{u}}}={\psi}\widetilde{\textbf{\textit{u}}}$  outside $B_{R+1}$, so the restriction of $\textbf{\textit{u}}$ to $\partial\, B_{R+1}$ belongs to $W^{1+1/p',p}(\partial\, B_{R+1})$. Its clear that $ ({\varphi}\widetilde{\textbf{\textit{u}}},{\varphi}\widetilde{{\pi}})$ belongs to $H^{2}(\Omega_{R+1})\times H^{1}(\Omega_{R+1})$ and satisfies~\eqref{problem mixte}. Then thanks to Theorem~\ref{problem mixte}, we prove that $ ({\varphi}\widetilde{\textbf{\textit{u}}},{\varphi}\widetilde{{\pi}})\in W^{2,p}(\Omega_{R+1})\times W^{1,p}(\Omega_{R+1})$ solution of problem~\eqref{problem mixte}, which also implies 
that $({\varphi}\widetilde{\textbf{\textit{u}}},{\varphi}\widetilde{{\pi}})$ belongs to $W^{2,p}_{k+1}(\R^3)\times W^{1,p}_{k+1}(\R^3)$. Consequently, the pair $(\textit{\textbf{u}},\pi)$ belongs to $W^{2,p}_{k+1}(\Omega)\times W^{1,p}_{k+1}(\Omega)$ if $2\leqslant p\leqslant 6$.
\item[Step 2.] The case $g\neq 0$.\\
\noindent Let $\textbf{\textit{w}}$ be in  $W^{3,p}_{k+1}(\Omega)$ such that $\dfrac{\partial\textbf{\textit{w}}}{\partial\textbf{\textit{n}}}=g$ on $\Gamma$. According to step 1,  the following problem 
\begin{equation}\label{relevement de g et chi}
\begin{cases}
&-\Delta\textbf{\textit{z}}+\nabla\pi=\textbf{\textit{f}}+\Delta(\nabla w)\quad\text{and}\quad\mathrm{div}\,\textit{\textbf{z}}=\chi-\Delta w\quad\text{in}\quad\Omega,\\[4pt]
&\textit{\textbf{z}}\cdot\textit{\textbf{n}}=0\quad\text{and}\quad
2[\mathrm{\textbf{D}}(\textbf{\textit{z}})\textbf{\textit{n}}]_{\tau}+\alpha\textbf{\textit{z}}_{\tau}=\textbf{\textit{K}}\quad\mathrm{on}\quad\Gamma,
\end{cases}
\end{equation}

 has a solution in $W_{k+1}^{2,p}(\Omega)\times W^{1,p}_{k+1}(\Omega)$ if and only if, $\forall (\boldsymbol{\xi},\eta)\in \mathcal{N}^{p'}_{-k+1}(\Omega)$,
\begin{eqnarray}\label{condition de compatibilté pour xi et g}
\displaystyle\int_{\Omega}\big( \textbf{\textit{f}}+\Delta\,(\nabla \textbf{\textit{w}})\big)\cdot\boldsymbol{\xi}d\textbf{\textit{x}}
 -\displaystyle\int_{\Omega}(\chi-\Delta\,\textbf{\textit{w}})\eta d\textbf{\textit{x}}=-
 \int_{\Gamma}\textbf{\textit{K}}\cdot \boldsymbol{\xi}d\boldsymbol{\sigma}.
\end{eqnarray}

\noindent But if $(\boldsymbol{\xi},\eta)$ is in $\mathcal{N}^{p'}_{-k+1}(\Omega)$, then since $\nabla \textbf{\textit{w}}\in W_{k+1}^{2,p}(\Omega)$, 
we can write~\eqref{CC Green's formular p} for the pair $(-\nabla\,\textbf{\textit{w}},0)$ and we obtain
\begin{eqnarray}\label{relation entre g et nabla v}
\nonumber &&\int_{\Omega}\Delta\,(\nabla \textbf{\textit{w}})\cdot\boldsymbol{\xi} d\textbf{\textit{x}}+\displaystyle\int_{\Omega}\Delta \textbf{\textit{w}}\,\,\eta d\textbf{\textit{x}}=-
\int_{\Gamma} g\Big( 2[\mathrm{\textbf{D}}(\boldsymbol{\xi})\textbf{\textit{n}}]\cdot{\textbf{\textit{n}}}-\eta\Big) d\boldsymbol{\sigma}\\
 &&
+\int_{\Gamma} \Big( 2[\textbf{D}(\nabla \textbf{\textit{w}})\textbf{\textit{n}}]_{\tau}+\alpha(\nabla \textbf{\textit{w}})_{\tau}\Big)\cdot \boldsymbol{\xi}d\boldsymbol{\sigma}.
\end{eqnarray}
Combining \eqref{CC TH-strong} and \eqref{relation entre g et nabla v} allows to obtain~\eqref{condition de compatibilté pour xi et g}. 
Thus setting $\textbf{\textit{u}}=\textbf{\textit{z}}+\nabla\,\textbf{\textit{w}}\in W_{k+1}^{2,p}(\Omega)$, then the pair $(\textit{\textbf{u}},\pi)\in W_{k+1}^{2,p}(\Omega)\times W_{k+1}^{1,p}(\Omega)$ is a solution of $(\mathcal{S}_T)$.\\

Now, suppose that $p>6$. The above argument shows that $(\textbf{\textit{u}},\pi)$ belongs to $W^{2,6}_{k+1}(\Omega)\times W^{1,6}_{k+1}(\Omega)$ and we can repeat the same argument with $p=6$ instead of $p=2$ using the fact if $\mathcal{O}$ is a lipschitzian bounded domain, we have $W^{1,6}(\mathcal{O})\hookrightarrow L^{q}(\mathcal{O})$ for any real number $q>1$. We know that the following embedding holds
\begin{center}
$W^{2,p}(\Omega')\hookrightarrow W^{1,q}(\Omega')$,
\end{center}
for any $q\in [1,\infty]$ if $p>3$. Then we have $\textbf{\textit{u}}\in W^{2-1/6,6}(\Gamma)\hookrightarrow W^{1-1/p}(\Gamma)$
 for all $p>6$. Thus establishes the existence of solution $(\textbf{\textit{u}},\pi)$ in $W^{2,p}_{k+1}(\Omega)\times W^{1,p}_{k+1}(\Omega)$ of problem $(\mathcal{S}_T)$ when $p>6$.
\end{proof}

\noindent We finally close this section by the following theorem.
\begin{theo}\label{solution forte f et xi}
Assume that $p\geqslant 2$. Let $\textbf{\textit{f}}\in W^{0,p}_{k+1}(\Omega)$, $\chi\in W^{1,p}_{k+1}(\Omega)$, $g\in W^{1+1/p',p}(\Gamma)$, $\textbf{\textit{h}}\in W^{1-1/p,p}(\Gamma)$ such that  $\textbf{\textit{h}}\cdot\textbf{\textit{n}}=0$ on $\Gamma$ and that the compatibility condition~\eqref{CC TH-strong} is satisfied.
 Then, the Stokes problem $(\mathcal{S}_{T})$ has a solution $(\textbf{\textit{u}},{\pi})\in {W}^{2,p}_{k+1}(\Omega)\times {{W}}^{p}_{k+1}(\Omega)$ unique up to an element of $\mathcal{N}_{k+1}^{p}(\Omega)$. In addition, we have the following estimate:
 
\begin{eqnarray*}
&&\inf _{(\boldsymbol{\lambda},\mu)\in\mathcal{N}_{k+1}^{p}(\Omega)} \left( \Vert \textbf{\textit{u}}+\boldsymbol{\lambda}\Vert_{W^{2,p}_{k+1}(\Omega)}
+\Vert {\pi}+{\mu}\Vert_{{W}^{1,p}_{k+1}(\Omega)}\right) \\ 
&&\leqslant C\Big(\Vert \textbf{\textit{f}}\Vert_{W^{0,p}_{k+1}(\Omega}+\Vert\chi\Vert_{W^{1,p}_{k+1}(\Omega)}+\Vert \textbf{\textit{h}}\Vert_{{W}^{1-1/p,p}(\Gamma)}+\Vert g\Vert_{{W}^{1+1/p',p}(\Gamma)}\Big).
\end{eqnarray*}

\end{theo}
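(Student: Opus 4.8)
The plan is to reduce the general case to Theorem~\ref{solution forte f et xi a support compact}, in which $\textbf{\textit{f}}$ and $\chi$ are compactly supported, by first absorbing the behaviour at infinity of the data through a Stokes resolution in the whole space $\R^3$. Exactly as in the proof of Theorem~\ref{solution forte f et xi a support compact}, I would first establish the result for $2\leqslant p\leqslant 6$ and then recover the range $p>6$ by the same bootstrap, using $W^{1,6}(\mathcal{O})\hookrightarrow L^{q}(\mathcal{O})$ and $W^{2,p}(\Omega')\hookrightarrow W^{1,q}(\Omega')$ for $p>3$; so I describe only the case $2\leqslant p\leqslant 6$. Uniqueness up to $\mathcal{N}_{k+1}^{p}(\Omega)$ is already granted by Proposition~\ref{caracterisation du noyau}, hence only existence and the estimate remain.

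For existence, extend $\textbf{\textit{f}}$ by zero to $\widetilde{\textbf{\textit{f}}}\in W^{0,p}_{k+1}(\R^3)$ and pick a continuous extension $\widetilde{\chi}\in W^{1,p}_{k+1}(\R^3)$ of $\chi$. Since any function of $W^{1,p}_{k+1}(\R^3)$ agreeing with $\chi$ on $\Omega$ is an admissible extension, and since the polynomial kernel $N_{[1-3/p'+k]}$ entering the whole-space compatibility condition~\eqref{CCR^3} is finite dimensional and its polynomials are linearly independent over the bounded open set $\Omega'$, one can modify $\widetilde{\chi}$ inside $\Omega'$ so that $(\widetilde{\textbf{\textit{f}}},\widetilde{\chi})$ satisfies~\eqref{CCR^3}. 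Theorem~\ref{theorem de Stokes dans R^3} then provides $(\textbf{\textit{u}}_{0},\pi_{0})\in W^{2,p}_{k+1}(\R^3)\times W^{1,p}_{k+1}(\R^3)$ with $-\Delta\,\textbf{\textit{u}}_{0}+\nabla\,\pi_{0}=\widetilde{\textbf{\textit{f}}}$ and $\mathrm{div}\,\textbf{\textit{u}}_{0}=\widetilde{\chi}$ in $\R^3$. Restricting to $\Omega$ and setting $\textbf{\textit{v}}=\textbf{\textit{u}}-\textbf{\textit{u}}_{0}$, $q=\pi-\pi_{0}$, the pair $(\textbf{\textit{v}},q)$ must solve $(\mathcal{S}_{T})$ with \emph{zero} force and zero divergence (because $\widetilde{\textbf{\textit{f}}}_{\mid\Omega}=\textbf{\textit{f}}$ and $\widetilde{\chi}_{\mid\Omega}=\chi$) and with modified boundary data $g'=g-\textbf{\textit{u}}_{0}\cdot\textbf{\textit{n}}$, $\textbf{\textit{h}}'=\textbf{\textit{h}}-2[\mathrm{\textbf{D}}(\textbf{\textit{u}}_{0})\textbf{\textit{n}}]_{\tau}-\alpha(\textbf{\textit{u}}_{0})_{\tau}$. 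Checking trace exponents (the trace of $W^{2,p}_{k+1}(\R^3)$ on $\Gamma$ lies in $W^{2-1/p,p}(\Gamma)=W^{1+1/p',p}(\Gamma)$, and that of $\nabla\textbf{\textit{u}}_{0}$ in $W^{1-1/p,p}(\Gamma)$) gives $g'\in W^{1+1/p',p}(\Gamma)$, $\textbf{\textit{h}}'\in W^{1-1/p,p}(\Gamma)$, and $\textbf{\textit{h}}'\cdot\textbf{\textit{n}}=0$ because a tangential field is orthogonal to $\textbf{\textit{n}}$.

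It remains to verify that $(0,0,g',\textbf{\textit{h}}')$ satisfies the compatibility condition of Theorem~\ref{solution forte f et xi a support compact}, namely $\int_{\Gamma}g'\bigl(2[\mathrm{\textbf{D}}(\boldsymbol{\xi})\textbf{\textit{n}}]\cdot\textbf{\textit{n}}-\eta\bigr)\,d\boldsymbol{\sigma}=\int_{\Gamma}\textbf{\textit{h}}'\cdot\boldsymbol{\xi}\,d\boldsymbol{\sigma}$ for every $(\boldsymbol{\xi},\eta)\in\mathcal{N}^{p'}_{-k+1}(\Omega)$. This is obtained by applying the Green formula~\eqref{CC Green's formular p} to the pair $(\boldsymbol{\varphi},\psi)=(\textbf{\textit{u}}_{0},\pi_{0})_{\mid\Omega}$ — which is legitimate since $(\textbf{\textit{u}}_{0},\pi_{0})_{\mid\Omega}\in W^{2,p}_{k+1}(\Omega)\times W^{1,p}_{k+1}(\Omega)$ and~\eqref{CC Green's formular p} holds for such pairs by density — thereby expressing $\int_{\Omega}\textbf{\textit{f}}\cdot\boldsymbol{\xi}-\int_{\Omega}\chi\,\eta$ as boundary integrals of $\textbf{\textit{u}}_{0}$; subtracting that identity from the hypothesis~\eqref{CC TH-strong} leaves exactly the required relation for $g'$ and $\textbf{\textit{h}}'$ (using $\textbf{\textit{h}}\cdot\textbf{\textit{n}}=0$ and $\textbf{\textit{h}}'\cdot\textbf{\textit{n}}=0$ to replace $\boldsymbol{\xi}$ by $\boldsymbol{\xi}_{\tau}$ in the stress terms). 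Then Theorem~\ref{solution forte f et xi a support compact} yields $(\textbf{\textit{v}},q)\in W^{2,p}_{k+1}(\Omega)\times W^{1,p}_{k+1}(\Omega)$, and $(\textbf{\textit{u}},\pi)=(\textbf{\textit{v}}+\textbf{\textit{u}}_{0},q+\pi_{0})$ solves $(\mathcal{S}_{T})$; the estimate follows by adding the whole-space estimate of Theorem~\ref{theorem de Stokes dans R^3} (controlling $\textbf{\textit{u}}_{0}$, hence $g'$ and $\textbf{\textit{h}}'$, by $\|\textbf{\textit{f}}\|_{W^{0,p}_{k+1}(\Omega)}+\|\chi\|_{W^{1,p}_{k+1}(\Omega)}$) to the estimate of Theorem~\ref{solution forte f et xi a support compact}. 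I expect the main obstacle to be the double bookkeeping of compatibility conditions: arranging the extension of $\chi$ on $\Omega'$ so that~\eqref{CCR^3} holds, and then transporting the exterior compatibility~\eqref{CC TH-strong} through~\eqref{CC Green's formular p} into the condition for the reduced problem — both resting on the finite dimensionality and the mutual correspondence of the polynomial kernels $N_{[1-3/p'+k]}$ and $\mathcal{N}^{p'}_{-k+1}(\Omega)$.
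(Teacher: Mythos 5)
Your overall strategy --- extend the data to $\R^3$, solve the whole--space Stokes problem, and reduce to Theorem~\ref{solution forte f et xi a support compact} applied with zero force and zero divergence --- is different from the paper's (which, for large weights, first solves in the weaker space $W^{2,p}_{1}\times W^{1,p}_{1}$ and then extends the \emph{solution} across $\Gamma$ by an interior Dirichlet problem in order to upgrade its decay). Your reduction of the boundary compatibility condition via~\eqref{CC Green's formular p} applied to $(\textbf{\textit{u}}_0,\pi_0)_{\mid\Omega}$ is correct. But there is a genuine gap at the very first step: the claim that one can enforce the whole--space compatibility condition~\eqref{CCR^3} by modifying only $\widetilde{\chi}$ inside $\Omega'$. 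The condition~\eqref{CCR^3} reads $\langle\widetilde{\textbf{\textit{f}}},\boldsymbol{\lambda}\rangle-\langle\widetilde{\chi},\mu\rangle=0$ for all $(\boldsymbol{\lambda},\mu)\in N_{[1-3/p'+k]}$, and this space contains pairs with $\mu=0$ and $\boldsymbol{\lambda}\neq\boldsymbol{0}$: already $N_{0}=\mathcal{P}_{0}\times\{0\}$, so every constant vector field $\boldsymbol{\lambda}$ appears paired with $\mu=0$ as soon as $[1-3/p'+k]\geqslant 0$, i.e.\ $k\geqslant 1$ (for $k<3/p'-1$ the kernel is trivial and your argument is exactly the paper's Step~1). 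For such pairs the condition degenerates to $\int_{\R^3}\widetilde{\textbf{\textit{f}}}\cdot\boldsymbol{\lambda}=\int_{\Omega}\textbf{\textit{f}}\cdot\boldsymbol{\lambda}=0$, on which $\widetilde{\chi}$ has no influence whatsoever; and this vanishing is \emph{not} implied by the hypothesis~\eqref{CC TH-strong}, which tests $\textbf{\textit{f}}$ against $\boldsymbol{\xi}=\textbf{\textit{v}}(\boldsymbol{\lambda})-\boldsymbol{\lambda}$ (not against $\boldsymbol{\lambda}$ itself) and equates the result to boundary integrals of $g$ and $\textbf{\textit{h}}$ rather than to zero. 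So for $k\geqslant 1$ your construction of $(\textbf{\textit{u}}_0,\pi_0)$ in $W^{2,p}_{k+1}(\R^3)\times W^{1,p}_{k+1}(\R^3)$ breaks down.

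The gap is repairable within your own scheme: allow yourself to modify \emph{both} $\widetilde{\textbf{\textit{f}}}$ and $\widetilde{\chi}$ inside $\Omega'$. Since the map $(\boldsymbol{\lambda},\mu)\mapsto(\boldsymbol{\lambda}_{\mid\Omega'},\mu_{\mid\Omega'})$ is injective on the finite--dimensional space $N_{[1-3/p'+k]}$, the linear functionals $(\textbf{\textit{F}},X)\mapsto\int_{\Omega'}\textbf{\textit{F}}\cdot\boldsymbol{\lambda}-\int_{\Omega'}X\mu$ are linearly independent on $L^{p}(\Omega')\times W^{1,p}(\Omega')$, so a correction supported in $\Omega'$ can be chosen (linearly and continuously in the data) to enforce~\eqref{CCR^3}; the restrictions to $\Omega$ are untouched, so the residual problem still has zero force and divergence and the rest of your argument, including the estimate, goes through. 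The paper instead sidesteps the issue entirely by never imposing~\eqref{CCR^3} on an extension of the data: it extends the already--constructed solution, for which the jump of the stress across $\Gamma$ supplies exactly the boundary terms that make~\eqref{CCR^3} a consequence of~\eqref{CC TH-strong} (the computation culminating in~\eqref{condition de compatibilite sur F et e}). Either route works; as written, yours does not.
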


\begin{proof}\quad\\
 Here we prove that problem ($\mathcal{S}_T$) has a solution $(\textit{\textbf{u}},\pi)$ that belongs to $\in W_{k+1}^{2,p}(\Omega)\times W_{k+1}^{1,p}(\Omega)$. To that end, we proceed in two steps.
\item[Step 1.] The case $k<3/p'-1$.\\
First, let extend $\textbf{\textit{f}}$ by zero in $\Omega'$ and denote by $\widetilde{\textbf{\textit{f}}}\in W^{0,p}_{k+1}(\R^3)$ the extended function. Moreover, let $\widetilde{\chi}\in W^{1,p}_{k+1}(\R^3)$ be an extension of $\chi$. We consider the following problem:
\begin{eqnarray}
-\Delta\,\widetilde{\textbf{\textit{w}}}+\nabla\,\widetilde{\eta}=\widetilde{\textbf{\textit{f}}}\quad\text{ and }\quad \mathrm{div}\,\widetilde{\textbf{\textit{w}}}=\widetilde{\chi}\quad\text{ in }\R^3.
\end{eqnarray}
Since $k<3/p'-1$, then $N_{[1-3/p'+k]}=\lbrace(\textbf{\textit{0}},0)\rbrace$ and thus applying Theorem \ref{theorem de Stokes dans R^3} , we deduce that this problem has a solution $(\widetilde{w},\widetilde{\eta})\in W^{2,p}_{k+1}(\R^3)\times W^{1,p}_{k+1}(\R^3)$. Denoting the restriction to $\Omega$ by $(\textbf{\textit{w}},\eta)$ belongs to $W^{2,p}_{k+1}(\Omega)\times W^{1,p}_{k+1}(\Omega)$. Since $\textbf{\textit{w}} \in W^{2,p}_{k+1}(\Omega)$, then we have $[\textbf{D}(\textbf{\textit{w}})\textbf{\textit{n}}]_{\tau}$ belongs to $W^{1-1/p,p}(\Gamma)$. Consider now the following problem:
\begin{eqnarray}\label{relévement f et xi}
\begin{cases}
-\Delta\,\textbf{\textit{v}}+\nabla\,\theta=0,\quad\text{  } \mathrm{div}\,\textbf{\textit{v}}=0\quad\text{ in }\,\,\Omega,\\
\textbf{\textit{v}}\cdot\textbf{\textit{n}}=G\quad \text{ and }
2[\mathrm{\textbf{D}}(\textbf{\textit{v}})\textbf{\textit{n}}]_{\tau}+\alpha\textbf{\textit{v}}_{\tau}
=\textbf{\textit{H}}\quad\text{ on }\Gamma,
\end{cases}
\end{eqnarray}
Where $\textbf{\textit{H}}=-2[\mathrm{\textbf{D}}(\textbf{\textit{w}})\textbf{\textit{n}}]_{\tau}-\alpha\,\textbf{\textit{w}}_{\tau}+\textbf{\textit{h}}$ and $G=g-\textbf{\textit{w}}\cdot\textbf{\textit{n}}$. Its clear that $G\in W^{1+1/p',p}(\Gamma)$ and  $\textbf{\textit{H}}\in W^{1-1/p,p}(\Gamma)$ such that $\textbf{\textit{H}}\cdot\textbf{\textit{n}}=0$ on $\Gamma$. According to Theorem~\ref{solution forte f et xi a support compact}, the problem \eqref{relévement f et xi} has a solution in $W_{k+1}^{2,p}(\Omega)\times W^{1,p}_{k+1}(\Omega)$. Hence, the pair $(\textbf{\textit{u}},\pi)=(\textbf{\textit{w}}+\textbf{\textit{v}},\eta+\theta)$ belongs to $W^{2,p}_{k+1}(\Omega)\times W^{1,p}_{k+1}(\Omega)$ and satisfies problem $(\mathcal{S}_{T})$.\\
\item[Step 2.] The case $k\geqslant 3/p'-1$. We split this step in two cases:
\item[] The case $g=0$. Since $k\geqslant 3/p'-1>0$, then $(\textbf{\textit{f}},\chi)$ belongs to $W^{0,p}_{1}(\Omega)\times W^{1,p}_{1}(\Omega)$. According to Step 1, the problem $(\mathcal{S}_{T})$ has a solution $(\textbf{\textit{u}},\pi)$ belongs to $W^{2,p}_{1}(\Omega)\times W^{1,p}_{1}(\Omega)$. It remains now to prove that $(\textbf{\textit{u}},{\pi})$ belongs to $W^{2,p}_{k+1}(\Omega)\times {W}^{1,p}_{k+1}(\Omega)$.
To that end, we shall use again properties of the Stokes problem in the whole space $\R^3$. Since $\textit{\textbf{u}}\cdot\textit{\textbf{n}}=0$ on $\Gamma$, problem~\eqref{probleme borne omega'} has a solution $(\textit{\textbf{u'}},\pi')\in W^{2,p}(\Omega')\times W^{1,p}(\Omega')$. Set now 
\begin{eqnarray*}\widetilde{\textbf{\textit{u}}}=
\begin{cases}
{\textbf{\textit{u}}}\quad\text{in}\quad\Omega,\\
{\textbf{\textit{u'}}}\quad\text{in}\quad\Omega',\\
\end{cases}
\quad\text{and}\quad\quad
\widetilde{\pi}=
\begin{cases}
\pi\quad\text{in}\quad\Omega,\\
\pi'\quad\text{in}\quad\Omega'.\\
\end{cases}
\end{eqnarray*}

Then clearly, the pair $(\widetilde{\textbf{\textit{u}}},\widetilde{\pi})$ belongs to $ W^{1,p}_1(\R^3)\times {W}^{0,p}_{1}(\R^3)$. In order to apply Theorem~\ref{theorem de Stokes dans R^3} with data $-\Delta\,\widetilde{\textbf{\textit{u}}}+\nabla\,\widetilde{\pi}$ and $\mathrm{div}\,\widetilde{\textbf{\textit{u}}}$, 
we need to show that $(-\Delta\,\widetilde{\textbf{\textit{u}}}+\nabla\,\widetilde{\pi},\mathrm{div}\,\widetilde{\textbf{\textit{u}}})$ belongs to $W^{-1,p}_{k}(\R^3)\times W^{0,p}_{k}(\R^3)$ and satisfies~\eqref{CCR^3}. Therefore, denoting by $\widetilde{\textit{\textbf{f }}}\in W_{k+1}^{0,p}(\R^3)$ the extension of $\textit{\textbf{f }}$ by zero in $\Omega'$. For any $\boldsymbol{\varphi}\in \mathcal{D}(\R^3)$, using the same calculation as in the proof of Theorem\ref{solution forte f et xi a support compact}, we have:
\begin{align}
\label{formule pour le prolongement}
\nonumber \langle -\Delta\,\widetilde{\textbf{\textit{u}}}+\nabla\,\widetilde{\pi},\boldsymbol{\varphi}\rangle_{\boldsymbol{\mathcal{D'}}(\R^3)\times\mathcal{D}(\R^3)}  
&=& \displaystyle\int_{\R^3} \widetilde{\textbf{\textit{f}}}\cdot\boldsymbol{\varphi}d\textbf{\textit{x}}+2\int_{\Gamma}\boldsymbol{\varphi}\cdot\Big(\mathrm{\textbf{D}}({\textbf{\textit{u}}})\textbf{\textit{n}}
-\mathrm{\textbf{D}}({\textbf{\textit{u'}}})\textbf{\textit{n}}\Big)d\boldsymbol{\sigma}\\
 &&+\int_{\Gamma}\big(\boldsymbol{\varphi}\cdot\textbf{\textit{n}}\big)\big(\pi'-\pi\big)d\boldsymbol{\sigma}
- \int_{\Gamma}\boldsymbol{\varphi}\cdot\textbf{\textit{n}},\chi d\boldsymbol{\sigma}.
\end{align}
Since $\mathcal{D}(\R^3)$ is dense in $W^{1,p'}_{-k}(\R^3)$ and $\widetilde{\textbf{\textit{f}}}$ belongs to $W^{0,p}_{k+1}(\R^3)$, 
then \eqref{formule pour le prolongement} is still valid for any
$\boldsymbol{\varphi} \in W^{1,p'}_{-k}(\R^3)$ which implies that $-\Delta\,\widetilde{\textbf{\textit{u}}}+\nabla\,\widetilde{\pi}$ belongs to $W_{k}^{-1,p}(\R^3)$.\\
\noindent Now for any $\phi \in \mathcal{D}(\R^3)$, we have
\begin{align}\label{formule pour le prolongement2}
 \langle\mathrm{div}\,\widetilde{\textbf{\textit{u}}},\phi\rangle_{\mathcal{D'}(\R^3)\times\mathcal{D}(\R^3)} 
=\displaystyle\int_{\Omega}\chi\,\phi d\textbf{\textit{x}}.
\end{align}
Since $\chi$ belongs to $W^{1,p}_{k+1}(\Omega)$ and due to the density of $\mathcal{D}(\R^3)$ in $W^{0,p'}_{-k}(\R^3)$, then~\eqref{formule pour le prolongement2} is still valid for any $\phi \in W^{0,p'}_{-k}(\R^3)$, which implies that  $\mathrm{div}\,\widetilde{\textbf{\textit{u}}}$ belongs to $W^{0,p}_{k}(\R^3)$.\\
\noindent As a consequence, we have proved that $(-\Delta\,\widetilde{\textbf{\textit{u}}}+\nabla\,\widetilde{\pi},\mathrm{div}\,\widetilde{\textbf{\textit{u}}})$ belongs to $W_k^{-1,p}(\R^3)\times W_k^{0,p}(\R^3)$. Using the same calculation as in the proof of Theorem~\ref{solution forte f et xi a support compact}, we prove that $(-\Delta\,\widetilde{\textbf{\textit{u}}}+\nabla\,\widetilde{\pi},\mathrm{div}\,\widetilde{\textbf{\textit{u}}})$ satisfy~\eqref{CCR^3}. Therefore it follows from Theorem \ref{theorem de Stokes dans R^3}, that there exists a solution 
$(\widetilde{\textbf{\textit{z}}},\widetilde{q})\in(\textbf{\textit{W}}^{1,p}_{k}(\R^3)\times {W}^{0,p}_{k}(\R^3))$ satisfying the following Stokes problem:
\begin{eqnarray*}
-\Delta\,(\widetilde{\textbf{\textit{z}}}-\widetilde{\textbf{\textit{u}}})+\nabla\,(\widetilde{q}-\widetilde{\pi})=\boldsymbol{0}\quad\text{and} 
\quad \mathrm{div}\,(\widetilde{\textbf{\textit{z}}}-\widetilde{\textbf{\textit{u}}})=0\quad\text{in}\quad\R^3.
\end{eqnarray*}
It follows that $(\widetilde{\textbf{\textit{z}}}-\widetilde{\textbf{\textit{u}}},\widetilde{q}-\widetilde{\pi})$ belongs to 
$\left(W^{1,p}_{k}(\R^3)+{{W}}^{1,p}_{1}(\R^3) \right) \times 
\left(W^{0,p}_{k}(\R^3)+ W^{0,p}_{1}(\R^3) \right)$, then $(\widetilde{\textbf{\textit{z}}}-\widetilde{\textbf{\textit{u}}},\widetilde{q}-\widetilde{\pi})$
 also belongs to $N_{[1-3/p-k]}$. We deduce that there exist $(\boldsymbol{\lambda},\mu)\in N_{[1-3/p-k]}$, such that $\widetilde{\textbf{\textit{z}}}-
 \widetilde{\textbf{\textit{u}}}=\boldsymbol{\lambda}$ and $\widetilde{q}-\widetilde{\pi}=\mu$ which imply that the solution $(\textbf{\textit{u}},\pi)$
 belongs indeed to $W^{1,p}_{k}(\Omega)\times {{W}}^{0,p}_{k}(\Omega)$.\\

\noindent Finally, to prove that the solution $(\textit{\textbf{u}},\pi)\in W_k^{1,p}(\Omega)\times W_k^{0,p}(\Omega)$ of ($\mathcal{S}_T$) established previously,
actually belongs to $W_{k+1}^{2,p}(\Omega)\times W_{k+1}^{1,p}(\Omega)$, we can proceed as in the proof of Theorem~\ref{solution forte f et xi a support compact} with the use of the partition 
of unity~\eqref{partition de l'unite}.
\item[] For the case $g\neq 0$, we proceed as in the same way as in the proof of Theorem~\ref{solution forte f et xi a support compact}, we prove that the pair $(\textit{\textbf{u}},\pi)$ belongs to $W_{k+1}^{2,p}(\Omega)\times W_{k+1}^{1,p}(\Omega)$ is a solution of $(\mathcal{S}_T)$.\\


\end{proof}

\end{document}